\documentclass[12pt]{amsart}
\usepackage{amsfonts,amsthm,amsmath,amssymb}
\allowdisplaybreaks
\usepackage[utf8]{inputenc}
\usepackage[T1]{fontenc}
\usepackage{indentfirst}
\usepackage{fullpage}
\usepackage{xcolor}
\usepackage{hyperref}
\usepackage{cleveref}

\numberwithin{equation}{section}
\date{}

\newcommand{\ddbar}{\sqrt{-1}\,\partial\overline\partial}

\newtheorem{theorem}{Theorem}
\newtheorem{propo}{Proposition}
\newtheorem{lema}{Lemma}

\crefname{propo}{Proposition}{Propositions}
\Crefname{propo}{Proposition}{Propositions}
\crefname{lema}{Lemma}{Lemmas}
\Crefname{lema}{Lemma}{Lemmas}

\theoremstyle{remark}
\newtheorem{remark}{Remark}

\theoremstyle{definition}
\newtheorem{example}{Example}

\hypersetup{
    colorlinks=true,
    linkcolor=blue,
    citecolor=green,
    filecolor=magenta,
    urlcolor=cyan,
}

\begin{document}
\raggedbottom

\title{Chern--Ricci flow on Kato surfaces}

\author[Daniele Angella]{Daniele Angella}
\address[Daniele Angella]{Dipartimento di Matematica e Informatica ``Ulisse Dini'', Universit\`a degli Studi di Firenze, viale Morgagni 67/A, 50134 Firenze, Italy}
\email{daniele.angella@unifi.it,
daniele.angella@gmail.com}

\author[Mauricio Corr\^ea]{Maur\'icio Corr\^ea}
\address[Mauricio Corr\^ea]{Dipartimento di Matematica,\newline Universit\`a degli Studi di Bari, \newline Via E. Orabona 4, I-70125, Bari, Italy
}
\email{mauricio.correa.mat@gmail.com, mauricio.barros@uniba.it}

\subjclass[2020]{Primary: 53E30, 32J15. Secondary: 53C55, 53C12, 53C23.}

\begin{abstract}
Let $S$ be a Kato surface and $D$ its maximal reduced divisor of rational curves. On $S\setminus D$ we construct Hermitian metrics which are flat along the leaves of the canonical foliation and study their evolution under the Chern--Ricci flow. For Enoki surfaces, we construct an immortal normalised solution which, on every compact sublevel of the natural exhaustion, converges in the Gromov--Hausdorff sense to the elliptic base endowed with an explicit flat metric. For Kato surfaces of intermediate type, the affine Green model and its finite-index extension give, under the assumption $0<\mu<2$, an explicit normalised solution which, on every compact Green block, collapses in the Gromov--Hausdorff sense to a circle. In the Enoki case the limiting area is $2\pi b_2(S)$; in the intermediate case the length of the limiting circle is determined by the Green and leafwise monodromies.
\end{abstract}

\maketitle

\section*{Introduction and motivation}

The classification of compact complex surfaces of class VII, namely those with Kodaira dimension $\mathrm{Kod}(X)=-\infty$ and first Betti number $b_1(X)=1$, is not yet complete.
One may assume that the surface is minimal, since every rational curve of self-intersection $-1$ can be contracted.
When $b_2(X)=0$, it is known from \cite{bogomolov-1976, bogomolov, li-yau-zheng0, li-yau-zheng, teleman} that these surfaces are either Hopf surfaces (namely compact complex surfaces with universal cover $\mathbb C^2\setminus \{0\}$) or Inoue surfaces \cite{inoue, bombieri}. Hopf surfaces admit a finite unramified cover diffeomorphic to $S^1\times S^3$.
Inoue surfaces, which are diffeomorphic to the total space of a bundle over $S^1$, contain no holomorphic curves but admit a holomorphic parabolic foliation without singularities, whose leaves are biholomorphic to either $\mathbb C$ or $\mathbb C\setminus\{0\}$ and are dense in the fibres of the smooth fibration.
Kato \cite{kato} generalised the first examples \cite{inoue-2, inoue-3} of class VII surfaces with $b_2(X)>0$ by iteratively blowing up the standard ball in $\mathbb C^2$ and then performing a holomorphic surgery.
They are characterised by the existence of a global spherical shell, namely an open subset $U \subset X$ biholomorphic to a neighbourhood of $S^3$ in $\mathbb C^2$ such that $X\setminus U$ is connected. Every Kato surface $X$ contains exactly $b_2(X)$ rational curves; conversely, every minimal compact complex surface in class VII containing exactly $b_2(X)>0$ rational curves is a Kato surface \cite{dloussky-oeljeklaus-toma}.
Kato surfaces are also degenerations of blown-up primary Hopf surfaces \cite{nakamura} and admit singular holomorphic foliations \cite{DO}.
The Global Spherical Shell conjecture remains open in general; substantial progress for small values of $b_2$ is due to Teleman \cite{teleman-Inv, teleman-Ann, teleman-PSPM, teleman-INdAM}. We refer to \cite{dloussky-Kato} for a detailed description of the geometry of Kato surfaces and to \cite{teleman-LNM} for a survey of the conjecture.

The ``optimistic conjecture'' of \cite[Section 4.6]{fang-tosatti-weinkove-zheng} aims to recover a global spherical shell from the limiting behaviour of the Chern--Ricci flow. The Chern--Ricci flow \cite{tosatti-weinkove} is the evolution equation for Hermitian metrics
$$ \frac{\partial \omega(t)}{\partial t} = -\operatorname{Ric}(\omega(t)), $$
where, locally, $\operatorname{Ric}(\omega(t))=-\sqrt{-1}\partial\overline\partial\log\det(g_{j\bar k}(t))$ is the Chern--Ricci form of $\omega(t)=\sqrt{-1}g_{j\bar k}(t)\,dz^j\wedge d\bar z^k$. When a positive $(1,1)$-form $\omega$ is regarded as a real Riemannian metric, we use the convention $g_\omega(u,v)=\omega(u,Jv)$,  where $J$ is the integrable almost complex structure. On a compact complex manifold the Chern--Ricci flow is equivalent to a scalar parabolic equation of Monge--Amp\`ere type. Standard parabolic theory therefore gives, for every initial metric $\omega(0)=\omega_0$, a unique solution on a maximal interval $[0,T)$, where $0<T\leq+\infty$. The maximal existence time is characterised as the supremum of those $t>0$ for which the class of $\omega_0-t\operatorname{Ric}(\omega_0)$ admits a positive representative up to a $\partial\overline\partial$-exact term; see \cite{tosatti-weinkove}. For general background on the Chern--Ricci flow, we refer to \cite{tosatti-weinkove, tosatti-weinkove-survey}; the surface case is treated in \cite{tosatti-weinkove-2} and the references therein.

For compact complex surfaces $S$, the Chern--Ricci flow may be non-collapsing at a finite time, meaning that $T<+\infty$ and the volume of $S$ with respect to $\omega(t)$ does not tend to zero as $t\to T$. In this case, as in the algebraic setting studied by Song--Weinkove \cite{song-weinkove}, the Chern--Ricci flow is expected to perform a canonical surgical contraction of finitely many $(-1)$-curves \cite{tosatti-weinkove-2, nie, to}.
The behaviour of the Chern--Ricci flow for minimal compact complex surfaces $S$ in class VII is known for some Hopf surfaces and for Inoue surfaces.

Consider a classical Hopf surface $H$ with parameters $\alpha,\beta\in\mathbb C$ satisfying $0<|\alpha|=|\beta|<1$, that is, a quotient of $\mathbb C^2\setminus\{0\}$ by the group generated by the diagonal contraction with eigenvalues $\alpha$ and $\beta$. It admits a standard metric $\omega_{\text{std}}$ whose lift to the universal cover is conformal to the flat K\"ahler metric; this K\"ahler cover is the K\"ahler cone over the Sasaki manifold $S^3$. The solution $\omega(t)$ of the Chern--Ricci flow with initial metric $\omega_{\text{std}}$ is explicit and collapses in finite time $T=\tfrac12$. As $t\to\tfrac12$, the metrics $\omega(t)$ smoothly converge to a non-negative $(1,1)$-form, whose kernel defines a smooth distribution whose iterated brackets generate the tangent space of the Sasaki $S^3$.
The metric spaces $(H, \omega(t))$ converge to $(S^1,d)$ in Gromov--Hausdorff topology, where $d$ is the distance on the circle $S^1\subset \mathbb R^2$ with radius $-\frac{\log|\alpha|}{\sqrt{2}\pi}$.
Thus the limiting geometry reflects both the differential and the holomorphic structures of $H$.
For primary linear Hopf surfaces, where possibly $|\alpha|\neq|\beta|$, related results were obtained in \cite{edwards}.

Inoue surfaces are classified in \cite{inoue} into three families, $S_M$, $S^+$, and $S^-$, according to their defining parameters.
Each is a fibre bundle over $S^1$, where the fibre is a $3$-dimensional torus in case $S_M$, and a compact quotient of the $3$-dimensional Heisenberg group in case $S^+$, while those of type $S^-$ admit an unramified double cover of type $S^+$.
They also carry a nonsingular holomorphic foliation whose leaves are biholomorphic to $\mathbb C$ in case $S_M$, and to $\mathbb C\setminus\{0\}$ in case $S^+$, and are dense in the fibres of the corresponding fibration.
The standard Tricerri--Vaisman metric, constructed in \cite{tricerri, vaisman}, is K\"ahler flat along the leaves of the foliation, and its lift to the universal cover is conformal to a K\"ahler metric.
Tosatti and Weinkove \cite{tosatti-weinkove-2} obtained an explicit solution with initial value the Tricerri--Vaisman metric. It exists for all $t\geq0$ and collapses as $t\to+\infty$. After renormalisation, the metrics $\omega(t)/t$ converge smoothly to a degenerate metric $\omega_\infty$ whose kernel defines the foliation. The metric spaces $(S, \omega(t)/t)$ likewise converge to $(S^1,d)$ in the Gromov--Hausdorff sense, where the radius of $d$ depends on the real eigenvalue or eigenvalues of the matrix defining $S$; see \cite[Section 5]{tosatti-weinkove-2}.
These convergence results extend to initial metrics which are $\partial\overline\partial$-deformations of metrics strongly flat along the leaves \cite{fang-tosatti-weinkove-zheng}, and in particular to Gauduchon metrics \cite{angella-tosatti}.

\bigskip

The purpose of this paper is to study the Chern--Ricci flow on Kato surfaces, complementing the results available for compact complex surfaces in class VII.
Throughout, if $S$ is a Kato surface, $D$ denotes its maximal reduced divisor of rational curves.
For a Gauduchon initial metric $\omega_0$ on $S$, the total volume evolves along the Chern--Ricci flow according to \cite{tosatti-weinkove}
$$\mathrm{Vol}(\omega(t))=\int_S\omega_0^2-4\pi t\int_S\omega_0\wedge c_1^{BC}(S)+4\pi^2t^2\int_S c_1^{BC}(S)^2.$$
Since $\int_S c_1^{BC}(S)^2=-b_2(S)<0$ by \cite[page 49]{dloussky-Kato}, the quadratic polynomial governing the total volume has a positive zero, so the cohomological volume obstruction occurs at a finite time. Let $C$ be an irreducible curve with $C^2<0$. By adjunction,
$$
K_S\cdot C+C^2=2p_a(C)-2,
$$
so that $c_1(S)\cdot C=2-2p_a(C)+C^2$. If $p_a(C)\geq1$, then $c_1(S)\cdot C\leq C^2<0$. If $p_a(C)=0$, then $C$ is a smooth rational curve; minimality excludes $C^2=-1$, hence $C^2\leq-2$ and again $c_1(S)\cdot C=2+C^2\leq0$. Therefore
$$
\frac{d}{dt}\int_C\omega(t)=-2\pi\,c_1(S)\cdot C\geq0.
$$
Thus the area of a negative curve cannot tend to zero along the flow. By \cite[Corollary 1.4]{tosatti-weinkove}, the maximal existence time for the Chern--Ricci flow on a compact complex surface with Gauduchon initial metric is characterised by the vanishing of either the total volume or the area of a curve of negative self-intersection. The latter alternative is excluded by the preceding inequality. Hence the maximal existence time is finite and the flow collapses in volume. We use the normalisation $\operatorname{Ric}(\omega)\in 2\pi c_1^{BC}(S)$.

On the open surface $S\setminus D$ we construct special Hermitian metrics together with explicit normalised Chern--Ricci solutions which collapse, along natural compact exhaustions, to an elliptic curve for Enoki surfaces and to a circle for Kato surfaces of intermediate type in the range $0<\mu<2$.

Recall that Kato surfaces are divided into several classes according to the configuration of the curves arising in their construction. Every Kato surface contains exactly $b:=b_2(S)$ rational curves $D_1,\dots,D_b$, some of which form a cycle $C$. The Dloussky number, defined by
$\sigma(S):=-\sum_{i=1}^b D_i^2$, satisfies $2b\leq\sigma(S)\leq 3b$, and the classes are distinguished by its value.
A Kato surface is called \emph{intermediate} if it contains a single cycle of rational curves together with at least one tree of rational curves meeting the cycle; equivalently, the Dloussky number satisfies the strict inequalities $2b<\sigma(S)<3b$.
The two extreme cases are as follows.
When $\sigma(S)=2b$, one obtains the \emph{Enoki surfaces}, which carry a non-empty, homologically trivial, effective divisor; in this case $S\setminus D$ is an affine bundle over an elliptic curve and admits an algebraic compactification. (Those Enoki surfaces that in addition contain an elliptic curve are known as \emph{parabolic Inoue surfaces}, or \emph{special Enoki surfaces}.)
When $\sigma(S)=3b$, one obtains the \emph{Inoue--Hirzebruch surfaces}, whose rational curves form one or two cycles.
We say that a Kato surface is \emph{hyperbolic} if it is either intermediate or of Inoue--Hirzebruch type.
In this case, there exists a negative plurisubharmonic function $G$ on the universal covering space $\tilde S \to S$, with connected fibres, called the Green function, which is pluriharmonic, surjective and submersive outside the pullback $\tilde D = p^*D$ of the rational curves, see \cite[Corollary 2.13]{DO}.
For further details on the structure and geometry of Kato surfaces we refer, for instance, to \cite{dloussky-Kato, BHPV, teleman-LNM}.

By \cite{dloussky-kohler, DO}, every Kato surface carries a global singular holomorphic foliation $\mathcal F$, smooth on the open surface $S\setminus D$.
This foliation has the following properties (see \cite[Theorem~1.9]{dloussky-kohler} for Enoki surfaces and \cite[Theorem~2.14]{DO} for hyperbolic ones): the singular
set $\mathrm{Sing}(\mathcal F)$ consists of the $b$ intersection points of the rational curves; the complement of $\mathrm{Sing}(\mathcal F)$ in each rational curve is a leaf; and every other leaf is biholomorphic to $\mathbb C$.
In the intermediate case, the proof of \cite[Theorem 5.6]{DO} shows that the pull-back foliation on the covering $\mathbb H_\ell\times\mathbb C$ of $S\setminus D$ is tangent to the fibres of the projection $\mathbb H_\ell\times\mathbb C\to\mathbb H_\ell$, where $\mathbb H_\ell=\{w\in\mathbb C:\Re w<0\}$ is the left half-plane.

We treat the Enoki and intermediate cases separately. In each case we construct smooth Hermitian metrics on the open surface $S\setminus D$ (see \Cref{prop:metriche-enoki} for the Enoki case and \Cref{prop:hyperbolic-leafwise-flatness} for the intermediate one) and study their evolution under the Chern--Ricci flow (see \Cref{thm:crf-enoki} for the Enoki case, and \Cref{thm:main-open-flow} together with \Cref{thm:green-exhaustive-GH} for the intermediate one). In the intermediate case, the canonical foliation on $S\setminus D$ has a natural flat leafwise monodromy. If $T\mathcal F$ denotes its tangent line bundle, let $\lambda\in\mathbb C^*$ be the value of the corresponding flat monodromy character on the deck transformation associated with the Green direction, and let $k=k(S)\in\mathbb Z$, $k\geq2$, be the transverse Green multiplier. We set $\mu=-2\log|\lambda|/\log k$, equivalently $k^\mu|\lambda|^2=1$. The same integer $k$ enters the topology of the complement through \cite{dloussky-oeljeklaus-toma,oeljeklaus-toma-moduli}
$$\pi_1(S\setminus D)\simeq \mathbb Z\!\left[\frac1k\right]\rtimes\mathbb Z.$$

The range $0<\mu<2$ is realised by intermediate Kato surfaces. Indeed, in the affine normal forms of \cite[Section~3.1]{oeljeklaus-toma-moduli}, the multiplier $\lambda\in\mathbb C^*$ is a complex parameter subject to $(\lambda-1)a_0=0$. In particular, when $a_0=0$, these normal-form families contain intermediate surfaces with $k^{-1}<|\lambda|<1$, which is equivalent to $0<\mu<2$.

\begin{theorem}\label{thm:main-open-kato-collapse}
Let $S$ be a Kato surface with $b_2(S)=n>0$, let $D$ be its maximal reduced divisor of rational curves, and set $A:=S\setminus D$. In the Enoki and intermediate cases below, the open surface $A$ carries smooth Hermitian metrics which are flat along the leaves of the canonical foliation. For these metrics we construct normalised Chern--Ricci solutions which collapse homothetically in the leafwise directions and converge transversely.  More precisely:
\begin{enumerate}
\item If $S$ is an Enoki surface, then, along the natural compact exhaustion of $A$, the associated metric spaces converge in the Gromov--Hausdorff sense to the elliptic curve $E$ endowed with the flat metric
$$g_{\infty,E}=-\frac{n}{2\log|\alpha|}\,g_E,$$
where $\alpha\in\mathbb C$, $0<|\alpha|<1$, is the transverse multiplier defining the elliptic quotient $E=\mathbb C^*/\langle z\mapsto\alpha z\rangle$. In particular, $\operatorname{Vol}(E,g_{\infty,E})=2\pi n=2\pi b_2(S)$.
\item Let $S$ be of intermediate type and assume
$$0<\mu:=-\frac{2\log|\lambda|}{\log k}<2,$$
where $k=k(S)\in\mathbb Z$, $k\geq2$, is the Green multiplier and $\lambda\in\mathbb C^*$ is the leafwise monodromy multiplier of the canonical foliation. Then, along the Green compact exhaustion of $A$, the associated metric spaces converge in the Gromov--Hausdorff sense to the Green circle
$$S^1_G=\left.\mathbb R\middle/\left(\frac12\log k\right)\mathbb Z\right.,$$
equipped with the flat length metric induced by $g_G=2(2-\mu)\,d\rho\otimes d\rho$.
\end{enumerate}
\end{theorem}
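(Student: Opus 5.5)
The plan is to build everything on explicit models for the universal cover of $A$ and to reduce the Chern--Ricci flow to an ODE for two scalar parameters. Because the metrics are flat along the leaves, the Chern--Ricci form depends only on the transverse data. This forces the ansatz $\omega(t)=a(t)\,\omega_{\mathcal F}+b(t)\,\omega_{\rm tr}$, and we then normalise and pass to the limit on compact pieces.

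\emph{Enoki case.} We use the description of $A$ as an affine line bundle over $E=\mathbb C^*/\langle\alpha\rangle$. Lift to $\mathbb C^*\times\mathbb C$ with coordinates $(z,w)$, where the leaves are $\{z=\mathrm{const}\}$. The deck action is $(z,w)\mapsto(\alpha z,\alpha^n z w+t(z))$ in the Dloussky normal form, with $n=b_2(S)$. We take the metric of \Cref{prop:metriche-enoki}. Its leafwise part is $\sqrt{-1}\,h(z)\,\theta\wedge\bar\theta$, where $\theta=dw-\dots$ is an invariant $(1,0)$-form defining the foliation and $h$ is a weight making it invariant. Its transverse part is $\sqrt{-1}\,dz\wedge d\bar z/|z|^2$. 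The Chern--Ricci form is then $-\ddbar\log h$, a transverse form. We choose $h=|z|^{\gamma}$ with $\gamma$ fixed by invariance, so that $\log h=\gamma\log|z|$ is pluriharmonic. This is not quite right: invariance under $\alpha^n$ forces an extra $\log|z|^2$-type factor. The natural invariant choice is $h=\exp(-c(\log|z|)^2)$, whose $\ddbar\log h$ is a constant multiple of $\sqrt{-1}\,dz\wedge d\bar z/|z|^2$, with constant $c=-n\log|\alpha|^{-1}\cdot(\dots)$ fixed by matching the monodromy $|\alpha|^{2n}$.

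The flow then reads $\omega(t)=\omega_{\mathcal F}+(1+\kappa t)\,\omega_{\rm tr}$, with $\kappa$ explicit, and this is an exact solution. After normalising by $1/t$, the leafwise part tends to $0$ and the transverse part tends to $\kappa\,\omega_{\rm tr}$. On every compact sublevel of the exhaustion, the fibres have leafwise diameter $O(t^{-1/2})$. The projection to $E$ is therefore an $\varepsilon$-GH approximation, and we obtain $g_{\infty,E}=\kappa g_E$. Computing $\kappa=-n/(2\log|\alpha|)$ and integrating $g_E$ over a fundamental annulus, whose area is $2\pi\cdot(-2\log|\alpha|)\cdot(\ldots)$, gives $\mathrm{Vol}=2\pi n$. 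This area computation is a consistency check on the constant.

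\emph{Intermediate case.} We work on $\mathbb H_\ell\times\mathbb C$ with coordinates $(w,\zeta)$, where $\Re w$ is the lifted Green function. The metric of \Cref{prop:hyperbolic-leafwise-flatness} is $e^{\mu\rho}\,\sqrt{-1}\,\theta\wedge\bar\theta+\sqrt{-1}\,dw\wedge d\bar w/(\Re w)^2$-type, with $\rho=-\log(-\Re w)$. The exponent is chosen so that $k^\mu|\lambda|^2=1$ makes the leafwise part invariant under the Green deck transformation. We then compute $\mathrm{Ric}=-\ddbar\log(\text{leaf weight}\cdot\text{transverse weight})$. This is a multiple of the transverse form of the shape $(2-\mu)\,\sqrt{-1}\,\partial\rho\wedge\bar\partial\rho$ plus lower-order terms, and its sign is positive exactly when $0<\mu<2$. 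This is where the hypothesis enters. The ansatz again gives an explicit solution, and we normalise by $t$. Within a compact Green block $\{\rho\in[\rho_0,\rho_0+\tfrac12\log k]\}$, both the leaf directions and the imaginary direction $\Im w$ collapse, since the transverse metric in $\Im w$ does not grow. Only $d\rho$ survives, with coefficient $2(2-\mu)$. The GH limit is the interval glued by the deck map $\rho\mapsto\rho+\tfrac12\log k$, namely $S^1_G$. The GH estimate follows from diameter bounds on the fibres of $\rho$ inside the block together with the Lipschitz property of $\rho$.

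The main obstacle is the intermediate Chern--Ricci computation. There we must verify that the cross terms between $\theta$ and $dw$, coming from the non-trivial affine monodromy of the leafwise coordinate, vanish or remain bounded after normalisation. We must also check that the $\Im w$ direction genuinely collapses; this would need an explicit choice of invariant transverse potential, and I would try $\log$ of a power of $-\Re w$ first.
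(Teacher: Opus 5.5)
\textbf{Enoki case.} Your argument is the paper's constant-coefficient special case, namely the explicit solution $\omega_0-\frac{n}{2\log|\alpha|}\,t\,\sqrt{-1}\,\zeta\wedge\bar\zeta$ recorded after \Cref{prop:crf}. That is enough for the statement, modulo some slips. The deck map is $(z,w)\mapsto(\alpha z,z^nw+P(z))$. The vertical coform is only equivariant, $F^*\eta=z^n\eta$, not invariant. The weight is $e^{q(\log|z|)}$ with $q(\sigma)=n\sigma-\frac{n}{\log|\alpha|}\sigma^2$; its positive quadratic coefficient is what gives $-\operatorname{Ric}\geq0$. For the GH step you should also state that $\pi$ is $1$-Lipschitz and that horizontal lifts of base geodesics stay in a fixed larger sublevel $K_{R'}$.

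\textbf{The gap in the intermediate case.} You claim the $\Im w$ direction collapses ``since the transverse metric in $\Im w$ does not grow''. It does not grow, but it does not shrink either. With $s=-\Re w$, $\theta=\Im w$ and $\rho=\frac12\log s$, the normalised metrics converge to $(2-\mu)\omega_{\rm tr}$, and $g_{\rm tr}=2\,d\rho^2+\frac{1}{2s^2}\,d\theta^2$. So the angular segment $\theta\in[0,2\pi]$ of a Green block keeps length of order one, and small fibres of $\rho$ cannot be read off from the metric. Your suggested remedy cannot work. A potential given by $\log$ of a power of $s$ only reproduces a multiple of $\omega_{\rm tr}$. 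More generally, $|\partial_\theta|=|\partial_s|$ for any Hermitian metric, because $J\partial_s=-\partial_\theta$. Hence any ansatz that keeps $d\rho$ with a positive coefficient keeps the angular direction at comparable length.

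\textbf{The missing idea.} The angular direction disappears because of the deck group, not the metric. The conjugates $\gamma_{n,m}=G^{-n}T^mG^n$ act by $w\mapsto w+2\pi\sqrt{-1}\,m/k^n$. So angular shifts in the dense set $2\pi\,\mathbb Z[1/k]$ are realised by deck transformations. The price is a vertical displacement $\lambda^{-n}\bigl(H_n(w)-H_n(w+2\pi\sqrt{-1}\,m/k^n)\bigr)$, which \Cref{lem:green-cocycle-growth} bounds by $C_K\exp(a_\lambda n+b_Kk^n)$.

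In \Cref{lem:fibre-collapse} one joins $\gamma_{n,m}\tilde p$ to the section point in two steps. First comes a vertical segment, on which $\Theta=dz$, so its length is at most $C_Ke^{-t/2}\exp(a_\lambda n+b_Kk^n)$ and no bound on $B$ is needed. Then comes an angular segment of length $O(k^{-n})$ inside the enlarged block. Choosing $n(t)=\lfloor\log_k(\delta t)\rfloor$ with $b_K\delta<\frac12$ makes both terms tend to zero. These paths need not stay in the block, which is why $d_t$ is the ambient distance of $A$.

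Your lower bound through $\rho$, which every deck transformation shifts by a multiple of $\ell_G$, and your use of a section are as in the paper.

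\textbf{Other remarks.} The step you single out as the main obstacle is harmless. In the coframe $(dw,\Theta)$ with $\Theta=dz+B\,dw$ the cross terms cancel exactly. One gets $\det\Omega_t^M=\frac{A(t)e^{-t}}{4}s^{\mu-2}$ and $\operatorname{Ric}(\Omega_t^M)=-(2-\mu)\omega_{\rm tr}$, with no lower-order terms. What does need an argument is the existence of a $\Gamma$-equivariant $\Theta$. The paper obtains it by averaging a metric over $\Gamma$ with a cut-off and taking the orthogonal complement of $\partial_z$. Finally, your weight $e^{\mu\rho}$ with $\rho=-\log(-\Re w)$ equals $s^{-\mu}$. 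Invariance under $G$ with $k^\mu|\lambda|^2=1$ requires $s^{\mu}$ instead.
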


In the hyperbolic case, we treat the intermediate case only, since the affine Green normal form of \cite{dloussky-oeljeklaus-toma}, on which the argument rests, is stated for intermediate surfaces. We ask whether similar arguments extend to the Inoue--Hirzebruch case.

\section{Enoki surfaces}

Let $S$ be an Enoki surface, that is, a minimal surface with $b_1(S)=1$, $\mathrm{Kod}(S)=-\infty$, and $b_2(S)=n>0$, whose maximal reduced divisor of rational curves $D$ is homologically trivial and satisfies $D^2=0$. By \cite[Main Theorem]{Enoki}, $S$ is biholomorphic to the compactification of an affine bundle of degree $-n$ over an elliptic curve. More precisely, $A:=S\setminus D$ is biholomorphic to the quotient of $\mathbb C^*\times\mathbb C$ by the group generated by
$$ F(z,w) := \left( \alpha z, \; z^n w + P(z)\right), $$
where $\alpha \in \mathbb C$ satisfies $0<|\alpha|<1$, and $P(z)$ is a polynomial of degree $n$.
Throughout, we use $z$ as a coordinate on $\mathbb C^*$ and $w$ as a coordinate on $\mathbb C$.
The map $\pi \colon \mathbb C^*\times\mathbb C \to \mathbb C^*$, $\pi(z,w)=z$, induces the affine bundle $\pi \colon A \to E$ over the elliptic curve $E:=\mathbb C^*/\langle z \mapsto \alpha z \rangle$.

\subsection{Adapted coframe}

The holomorphic $1$-form $\zeta:=dz/z$ on $\mathbb C^*\times\mathbb C$ is $F$-invariant and hence descends to $A$. Its kernel defines the holomorphic foliation $\mathcal F\vert_A$, whose leaves are the fibres of $\pi\colon A \to E$. This foliation can be extended to a singular holomorphic foliation on $S$, see \cite{DO}.

We fix a smooth function $s\colon \mathbb C^*\to\mathbb C$ such that $s(\alpha z)=z^ns(z)-P(z)$.
To construct such a function, take a smooth section $\xi \colon E \to A$, obtained by gluing local sections together via a partition of unity. A lift of $\xi$ to $\mathbb C^*\times\mathbb C$ induces a smooth equivariant function $\tilde s\colon \mathbb C^* \to \mathbb C$. Set $s:=-\tilde s$.
The action of $F$ is linearised by setting
\begin{equation}\label{eq:tilde-w}
\tilde w:=w+(s\circ\pi),
\end{equation}
so that $\tilde w \circ F = z^n\tilde w$.

On $\mathbb{C}^{*}\times\mathbb{C}$ we consider the coframe of $(1,0)$-forms
$$ \zeta:=\frac{dz}{z}, \qquad \eta:=dw+B(z,w)\,dz, $$
where
$$ B(z,w):=\frac{\partial s}{\partial z}+\frac{q'(\log|z|)}{2z}\,\tilde{w}, \qquad
q(\sigma):=n\sigma-\frac{n}{\log|\alpha|}\,\sigma^{2}. $$
As noted above, $F^{*}\zeta=\zeta$, and a direct computation gives $F^{*}\eta=z^{n}\eta$. Thus $\zeta$ descends to $A$, whereas $\eta$ is an equivariant vertical coform on the covering; together they form a coframe adapted to the foliation.

This coframe has a natural interpretation in terms of the line bundle $L\to E$ given by the linear part of the affine bundle $A\to E$. Equip $L$ with the Hermitian fibre metric whose weight in this trivialisation is $\Lambda:=e^{q(\log|z|)}$, where, here and below, $q'$ and $q''$ are evaluated at $\log|z|$. Since $\partial\log|z|=\tfrac12\zeta$, we have $\partial\log\Lambda=\tfrac{q'}{2}\,\zeta$, equivalently $\partial\Lambda=\tfrac{q'}{2}\,\Lambda\,\zeta$, so that the Chern connection of $(L,\Lambda)$ has connection form $\tfrac{q'}{2}\,\zeta$. Its $(1,0)$-part consequently acts on the linearising
coordinate $\tilde w$ by
$$ \nabla^{1,0}\tilde{w} = \partial\tilde{w}+\frac{q'}{2}\,\tilde{w}\,\zeta = dw+\left(\frac{\partial s}{\partial z}+\frac{q'(\log|z|)}{2z}\,\tilde{w}\right)dz = \eta. $$

The structure equations are
\begin{align*}
d\zeta &= 0, \\
\partial\eta &= \frac{1}{2}q'(\log|z|)\,\eta\wedge\zeta, \\
\bar\partial\eta &= -\left(\frac{1}{4}q''(\log|z|)\,\tilde{w}
+|z|^{2}\frac{\partial^{2}s}{\partial z\,\partial\bar z}
+\frac{1}{2}q'(\log|z|)\,\bar z\,\frac{\partial s}{\partial\bar z}\right)\zeta\wedge\bar\zeta.
\end{align*}

\subsection{Special Hermitian metrics on the open surface}

A direct computation gives $q(\log|\alpha z|) = q(\log|z|) - 2n\log|z|$. Hence, for $\Lambda:=e^{q(\log|z|)}$ as before, $F^{*}\Lambda=|z|^{-2n}\Lambda$.
Hence the $(1,1)$-forms
\begin{equation}\label{eq:metriche-enoki}
\omega_{f,g}:=\sqrt{-1}\,f\,\zeta\wedge\bar\zeta
+\sqrt{-1}\,g\,\Lambda\,\eta\wedge\bar\eta,
\end{equation}
with $f,g\in\mathcal{C}^{\infty}(A,\mathbb{R})$, are well-defined on $A$; if $f,g>0$, they are Hermitian metrics. In particular, for $f>0$ and a constant $c>0$, the metric $\omega_{f,c}$ is flat along the leaves. Indeed, after lifting a leaf $L=\pi^{-1}([z_0])$ to $\{z=z_0\}$, one has
$$\omega_{f,c}\big|_{L}=\sqrt{-1}\,c\,\Lambda(z_0)\,dw\wedge d\bar w.$$

For positive $f$ and $g$, the Chern--Ricci form is
\begin{align*}
\operatorname{Ric}(\omega_{f,g})
&= -\sqrt{-1}\partial\overline\partial\log\det\omega_{f,g} \\
&= -\sqrt{-1}\partial\overline\partial\log\left(\frac{\Lambda}{|z|^2}fg\right) \\
&= -\sqrt{-1}\partial\overline\partial\left(q+\log f+\log g\right) \\
&= \frac{n}{2\log|\alpha|}\sqrt{-1}\zeta\wedge\bar\zeta -\sqrt{-1}\partial\overline\partial \log f -\sqrt{-1}\partial\overline\partial \log g.
\end{align*}
Set $\omega_E:=\sqrt{-1}\,\zeta\wedge\bar\zeta$ and let $g_E$ be its associated Riemannian metric. With the non-negative Laplacian $\Delta_E=d^*d$, our convention gives
$$-\sqrt{-1}\,\partial\overline\partial u=\frac12(\Delta_Eu)\,\omega_E$$
for every smooth function $u$ on $E$. Hence, if $f=F\circ\pi$ for a positive function $F\in\mathcal C^\infty(E,\mathbb R)$ and $g=c>0$ is constant, then
\begin{equation}\label{eq:ricci-1}
\operatorname{Ric}(\omega_{F\circ \pi,c}) = \sqrt{-1} \left(\frac{n}{2\log|\alpha|}+\frac{1}{2}(\Delta_E\log F) \circ \pi\right)\,\zeta\wedge\bar\zeta.
\end{equation}
When $f=a>0$ is constant,
$$ \operatorname{Ric}(\omega_{a,c})=\frac{n}{2\log|\alpha|}\sqrt{-1}\zeta\wedge\bar\zeta \leq 0 $$
is negative semidefinite (recall that $0<|\alpha|<1$), with kernel defining the foliation.

We also compute $\partial\overline\partial(\Lambda\,\eta\wedge\bar\eta)=-\frac{1}{4}q''\Lambda\,\zeta\wedge\bar\zeta\wedge\eta\wedge\bar\eta$. If, in addition, $g$ is the pull-back of a smooth function on $E$, denoted by the same letter, then
$$ \sqrt{-1}\partial\overline\partial\omega_{f,g} = - \left(\frac{\partial^2 f}{\partial w \partial \bar w}+\frac{1}{4}\Lambda\left(-2\Delta_E g-q''g\right)\right)\,\zeta\wedge\bar\zeta\wedge\eta\wedge\bar\eta. $$
For $f=a-\frac{n}{2\log|\alpha|}c\Lambda|\tilde w|^2$ and $g=c$, with $a,c>0$, one obtains
$$ \sqrt{-1}\partial\overline\partial\omega_{a-\frac{n}{2\log|\alpha|}c\Lambda|\tilde w|^2,c} = 0, $$
so the Gauduchon condition is satisfied.
When $f$ is constant or depends only on the base $E$, as $f=F \circ \pi$ with $F\in\mathcal C^\infty(E,\mathbb R)$, and $g=c>0$,
$$ \sqrt{-1}\partial\overline\partial\omega_{F\circ\pi,c} = \Lambda \frac{n}{2\log|\alpha|} c \, \left(\sqrt{-1}\,\zeta\wedge\bar\zeta\right) \wedge \left(\sqrt{-1}\,\eta\wedge\bar\eta\right) <0, $$
so the metric is plurisigned negative in the sense of \cite{angella-guedj-lu}.
The metric $\omega_{f,c}$ is also complete whenever $\inf_A f>0$ and $c>0$. Since $\omega_{f,c} \geq \min\{\inf f, c\}\,\omega_{1,1}$, it suffices to prove that $\omega_{1,1}$ is complete. The function $h:=\Lambda|\tilde w|^2$ is the squared fibre norm over the compact base $E$, hence is proper; the connection formula for $\eta$ gives $|dh|_{\omega_{1,1}}\leq C\sqrt h$. Thus $u:=\sqrt{1+h}$ is a proper Lipschitz function with respect to $d_{\omega_{1,1}}$. Integration along curves then shows that every closed metric ball is contained in a compact set.

We summarise the preceding computations as follows.

\begin{propo}\label{prop:metriche-enoki}
Let $S$ be an Enoki surface and let $D$ be its maximal reduced divisor of rational curves. Then the forms $\omega_{f,c}$ of \eqref{eq:metriche-enoki}, with $f$ smooth and positive on $S\setminus D$ and $c>0$, define smooth Hermitian metrics on $S\setminus D$ which are flat along the leaves of the canonical foliation and complete whenever $\inf_A f>0$.
For the choice $f=a-\frac{n}{2\log|\alpha|}c\Lambda|\tilde w|^2$, with $a>0$, the metric $\omega_{f,c}$ is in addition Gauduchon.
\end{propo}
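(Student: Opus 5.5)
The proof consists of three verifications for the forms $\omega_{f,c}$ of \eqref{eq:metriche-enoki}, all carried out on the covering $\mathbb C^*\times\mathbb C$ in the adapted coframe $(\zeta,\eta)$.

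\emph{Well-definedness and positivity.} We use $F^*\zeta=\zeta$, $F^*\eta=z^n\eta$ and $F^*\Lambda=|z|^{-2n}\Lambda$. Together they give
$$F^*(\Lambda\,\eta\wedge\bar\eta)=\Lambda\,\eta\wedge\bar\eta,$$
so $\omega_{f,c}$ is $F$-invariant as long as $f$ is a function on $A$. In that case $\omega_{f,c}$ descends to $A$. Since $(\zeta,\eta)$ is a coframe of $(1,0)$-forms and $f,c\Lambda>0$, the form is diagonal and positive definite in this coframe, hence a smooth Hermitian metric.

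\emph{Leafwise flatness.} On the lift $\{z=z_0\}$ of a leaf we have $\zeta|_L=0$ and $\eta|_L=dw$. Hence
$$\omega_{f,c}|_L=\sqrt{-1}\,c\Lambda(z_0)\,dw\wedge d\bar w,$$
which is a constant multiple of the Euclidean metric, so it is flat.

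\emph{Completeness.} Since $\omega_{f,c}\geq\min\{\inf f,c\}\,\omega_{1,1}$, it suffices to treat $\omega_{1,1}$. Set $h=\Lambda|\tilde w|^2$. The identity $\tilde w\circ F=z^n\tilde w$ together with the transformation law of $\Lambda$ shows that $h$ is $F$-invariant. Because $E$ is compact, $h$ is proper on $A$.

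Next we compute the derivative of $h$. We have $\partial\tilde w=\eta-\tfrac{q'}{2}\tilde w\,\zeta$ and $\partial\log\Lambda=\tfrac{q'}{2}\zeta$, which give
$$\partial h=\Lambda\,\bar{\tilde w}\,\eta.$$
Since $|\eta|^2_{\omega_{1,1}}=\Lambda^{-1}$, this yields $|dh|_{\omega_{1,1}}\leq C\sqrt h$. Consequently $u=\sqrt{1+h}$ is Lipschitz with respect to $d_{\omega_{1,1}}$. Metric balls therefore lie in sublevel sets of $u$, which are compact, and the Hopf--Rinow theorem gives completeness.

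\emph{Gauduchon condition.} Take $g=c$ and $f=a-\frac{n}{2\log|\alpha|}c\Lambda|\tilde w|^2$. This $f$ is positive because $\log|\alpha|<0$, and it is $F$-invariant by the previous step. We use the formula already computed,
$$\sqrt{-1}\partial\overline\partial\omega_{f,g}=-\Bigl(\frac{\partial^2f}{\partial w\partial\bar w}+\tfrac14\Lambda(-2\Delta_Eg-q''g)\Bigr)\zeta\wedge\bar\zeta\wedge\eta\wedge\bar\eta.$$
The functions $\Lambda$ and $s$ do not depend on $w$, so $\partial_w\partial_{\bar w}f=-\frac{n}{2\log|\alpha|}c\Lambda$. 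Also $\Delta_E c=0$ and $q''=-\frac{2n}{\log|\alpha|}$, so
$$\tfrac14\Lambda(-q''c)=\frac{n}{2\log|\alpha|}c\Lambda.$$
These two terms cancel, hence $\partial\overline\partial\omega_{f,c}=0$. Since $\partial\overline\partial\omega=0$ is exactly the Gauduchon condition in complex dimension two, the metric is Gauduchon.

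The step needing the most care is the bound $|dh|\leq C\sqrt h$. It rests on the exact cancellation giving $\partial h=\Lambda\bar{\tilde w}\,\eta$, so the signs in the connection formula for $\eta$ must be tracked carefully. The rest is bookkeeping of invariance.
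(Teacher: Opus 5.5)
\textbf{Verdict: same route as the paper, but one step is wrong as written.} Your four verifications follow the paper's argument: $F$-invariance of $\Lambda\,\eta\wedge\bar\eta$, restriction to $\{z=z_0\}$, the Lipschitz exhaustion $u=\sqrt{1+h}$, and the cancellation $f_{w\bar w}=\tfrac14 q''c\Lambda$. The invariance, leafwise flatness and Gauduchon parts are correct.

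\textbf{The error.} It is in the step you flagged yourself: the identity $\partial h=\Lambda\bar{\tilde w}\,\eta$ is false in general. You dropped the term $\Lambda\tilde w\,\partial\bar{\tilde w}$, which vanishes only if $s$ is holomorphic. Here $s$ is merely smooth. Since $\tilde w\circ F=z^n\tilde w$, the locus $\{\tilde w=0\}$ descends to a section of $A\to E$. If $s$ were holomorphic, this section would be an elliptic curve in $S$, which is impossible for a general Enoki surface. Since $\bar\partial\tilde w=\bar\partial s=\partial_{\bar z}s\,d\bar z$, the correct formula is
\[
\partial h=\Lambda\,\bar{\tilde w}\,\eta+\Lambda\,\tilde w\,\overline{\bar z\,\partial_{\bar z}s}\;\zeta .
\]
Put differently, $\eta=\nabla^{1,0}\tilde w$ is only the $(1,0)$-part of the covariant derivative of $\tilde w$. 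The missing $(0,1)$-part $\bar\partial s$ measures how far the smooth section is from being holomorphic. So the bound $|dh|_{\omega_{1,1}}\le C\sqrt h$ cannot rest on an exact cancellation.

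\textbf{The repair.} The estimate still holds once you add one observation. Differentiate $s(\alpha z)=z^ns(z)-P(z)$ in $\bar z$, using that $z^n$ and $P$ are holomorphic. This gives $(\partial_{\bar z}s)(\alpha z)=\bar\alpha^{-1}z^n\,\partial_{\bar z}s(z)$. Combined with $F^*\Lambda=|z|^{-2n}\Lambda$, it shows that $\Lambda\,|z\,\partial_{\bar z}s|^2$ is $F$-invariant. It therefore descends to the compact curve $E$ and is bounded. The $\omega_{1,1}$-norm of the extra term is $\sqrt h\cdot\Lambda^{1/2}|z\,\partial_{\bar z}s|\le C\sqrt h$. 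Hence $|dh|_{\omega_{1,1}}\le C\sqrt h$ holds with a constant depending on $\sup_E\Lambda^{1/2}|z\,\partial_{\bar z}s|$, and the rest of your completeness argument goes through unchanged.

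With this correction your proof coincides with the paper's. There, the statement that the connection formula for $\eta$ gives $|dh|_{\omega_{1,1}}\leq C\sqrt h$ needs this $(0,1)$-contribution to be absorbed into $C$.
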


\begin{remark}
It is natural to ask whether the metrics $\omega_{f,g}$ of \Cref{prop:metriche-enoki} extend to possibly singular Hermitian metrics on the whole surface $S$.
\end{remark}

\subsection{Explicit solutions to the Chern--Ricci flow}

On the Enoki surface $S$, endowed with a given Hermitian metric $\omega_0$, we consider the Chern--Ricci flow
$$ \frac{\partial\omega(t)}{\partial t}=-\operatorname{Ric}(\omega(t)), \qquad \omega(0)=\omega_0. $$
There exists a unique solution on some maximal interval $[0,T)$ with $T>0$.

Suppose now that $\omega_0$ is Gauduchon. Then $\omega(t)$ remains Gauduchon for as long as the flow exists. By \cite[Corollary 1.4]{tosatti-weinkove} and the discussion in the introduction, its maximal existence time is finite. By contrast, for the special metrics constructed above the flow on $S\setminus D$ exists for all $t\geq0$ and retains the topological information described below.

\begin{propo}\label{prop:crf}
Let $S$ be an Enoki surface and let $D$ be its maximal reduced divisor of rational curves. On $A:=S\setminus D$, consider the smooth Hermitian metric
$$ \omega_0 = \sqrt{-1}\, (F_0\circ\pi)\, \zeta\wedge\bar\zeta + \sqrt{-1}\, c_0\, \Lambda\, \eta\wedge\bar\eta, $$
where $F_0$ is a smooth positive function on the elliptic curve $E$ and $c_0>0$.
Then there exists a smooth solution $\omega(t)$ of the Chern--Ricci flow, defined for all $t \geq 0$, with $\omega(0)=\omega_0$.
\end{propo}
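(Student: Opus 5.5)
Since $A$ is non-compact, standard parabolic theory gives no existence result here. I will therefore make an explicit ansatz that reduces the flow to a scalar parabolic equation on the compact elliptic curve $E$.

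Look for a solution of the form
$$\omega(t)=\sqrt{-1}\,(F(t)\circ\pi)\,\zeta\wedge\bar\zeta+\sqrt{-1}\,c_0\,\Lambda\,\eta\wedge\bar\eta,$$
where $F(t)$ is a positive function on $E$ and the leafwise coefficient stays equal to $c_0$. By \eqref{eq:ricci-1}, $\operatorname{Ric}(\omega(t))$ is a multiple of $\sqrt{-1}\,\zeta\wedge\bar\zeta$ with coefficient pulled back from $E$. The $\eta\wedge\bar\eta$ component of the flow is therefore trivially satisfied with $g\equiv c_0$. The flow then reduces to the scalar equation on $E$
$$\frac{\partial F}{\partial t}=-\frac{n}{2\log|\alpha|}-\frac12\,\Delta_E\log F,\qquad F(0)=F_0.$$

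The key step is global solvability of this equation. Write $F=\big(\tfrac{-n}{2\log|\alpha|}\big)t+F_0+\varphi$, or more conveniently $F=e^{u}$. The equation becomes
$$e^u\,\partial_t u=-\tfrac{n}{2\log|\alpha|}-\tfrac12\Delta_E u.$$
Since $\Delta_E=d^*d$ is non-negative, $-\tfrac12\Delta_E$ is the (elliptic) heat operator. So this is a uniformly parabolic quasilinear equation on the compact surface $E$ as long as $u$ stays bounded, which is the logarithmic fast-diffusion equation in dimension two, i.e.\ the Ricci flow of the conformal metric $F\omega_E$ on $E$ with constant forcing. Short-time existence is standard. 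Equivalently, it is the Chern--Ricci (Kähler--Ricci) flow on the compact curve $E$ for the metric $F\,\omega_E$, whose Ricci form is $-\sqrt{-1}\partial\overline\partial\log F$ since $\omega_E$ is flat; the constant forcing term plays the role of a background form.

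Long-time existence needs a lower bound on $F$, and I expect this to be the main obstacle. First, the total area grows linearly:
$$\frac{d}{dt}\int_E F\,\omega_E=-\frac{n}{2\log|\alpha|}\operatorname{Area}(E)>0,$$
because the Laplacian term integrates to zero. For pointwise control, apply the maximum principle to $u=\log F$. At a spatial minimum $\Delta_E u\le 0$ (non-negative Laplacian), so $\partial_t F\ge -\tfrac{n}{2\log|\alpha|}>0$ there. Hence $\min F$ is nondecreasing and $F\ge\min F_0>0$. The same argument at a maximum gives $\max F\le\max F_0-\tfrac{n}{2\log|\alpha|}\,t$ up to the forcing. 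With these two-sided bounds the equation is uniformly parabolic on each finite interval $[0,T]$. Krylov--Safonov or De Giorgi--Nash estimates then give Hölder bounds, and Schauder bootstrapping gives all higher derivatives. The solution therefore extends to $[0,\infty)$.

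Lifting $F(t)\circ\pi$ back to $A$ produces a smooth positive $\omega(t)$ that solves the Chern--Ricci flow on $A$ for all $t\ge0$, with $\omega(0)=\omega_0$.
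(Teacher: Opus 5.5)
Your proposal is correct and follows essentially the same route as the paper. You use the same ansatz with the leafwise coefficient frozen at $c_0$, reduce via \eqref{eq:ricci-1} to the scalar equation $\partial_t F=-\frac12\Delta_E\log F-\frac{n}{2\log|\alpha|}$ on $E$, obtain two-sided maximum-principle barriers, and conclude with Krylov--Safonov, Schauder bootstrapping and continuation. The only difference is minor: your lower bound $F\geq\min_E F_0$ is weaker than the paper's linear barrier $F\geq\min_E F_0-\frac{n}{2\log|\alpha|}\,t$, but it is all that uniform parabolicity on finite intervals requires.
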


\begin{proof}
Consider the family of Hermitian metrics
$$ \omega(t) = \sqrt{-1}\, (F(t)\circ\pi)\, \zeta\wedge\bar\zeta + \sqrt{-1}\, c_0\, \Lambda\, \eta\wedge\bar\eta, $$
where $F(t)$ is a smooth function on $E$ with $F(0)=F_0$.
By \eqref{eq:ricci-1},
\begin{align*}
\operatorname{Ric}(\omega(t))
&=\sqrt{-1}\left(\frac{n}{2\log|\alpha|}+\frac12(\Delta_E\log F(t))\circ\pi\right)\zeta\wedge\bar\zeta.
\end{align*}
Thus $\omega(t)$ solves the Chern--Ricci flow starting at $\omega_0$ if and only if $F(t)$ solves the following scalar parabolic equation on $E$:
\begin{equation}\label{eq:par-scalar}
\frac{\partial F(t)}{\partial t} = - \frac{1}{2}\Delta_E \log F(t) - \frac{n}{2\log|\alpha|}, \qquad F(0)=F_0.
\end{equation}
For the geometric Laplacian $\Delta_E=d^{*}d$ on $E$, with non-negative eigenvalues, one has
$$\Delta_E\log F=\frac{\Delta_EF}{F}+\frac{|\nabla_EF|^2}{F^2}.$$
Hence \eqref{eq:par-scalar} can be rewritten as
$$ \frac{\partial F(t)}{\partial t}
= -\frac{\Delta_E F(t)}{2F(t)}
- \frac{|\nabla_E F(t)|^2}{2(F(t))^2}
- \frac{n}{2\log|\alpha|}, \qquad F(0)=F_0. $$
Since $\Delta_E$ has non-negative spectrum, the leading operator $-\tfrac{1}{2F}\Delta_E$ is the (forward) heat operator; as $F_0 \geq \min_E F_0>0$, the quasilinear equation is uniformly parabolic on a short time interval $[0,\varepsilon)$.

Standard quasilinear parabolic theory (see, for instance, \cite{lieberman, LSU}) gives $T_0>0$ and a unique smooth solution $F(t)$ on $[0,T_0]$. We prove that the maximal existence time is $T_{\rm max}=+\infty$.

The required bounds follow from the maximum principle. For $T>0$ such that \eqref{eq:par-scalar} has a positive solution $F(t)$, let $\phi,\psi \in \mathcal C^1([0,T])$ be such that
$$ \psi'(t) \leq -\frac{n}{2\log|\alpha|} \leq \phi'(t), $$
$$ \psi(0) \leq \min_E F(0) \leq F(0) \leq \max_E F(0) \leq \phi(0). $$
Then, for every $t \in [0,T]$,
$$ \psi(t) \leq F(t) \leq \phi(t) $$
on $E$. Indeed, fix $\varepsilon>0$ and set $w:=F-\phi-\varepsilon t$. At $t=0$, we have $w(0)\leq 0$. Let $(x_0,t_0)\in E \times [0,T]$ be a maximum point for $w$.
At a spatial maximum $x_0$ of $F(\cdot,t_0)$ one has $\nabla_E F=0$ and $\Delta_E F\ge0$, hence
$(\Delta_E\log F)(x_0,t_0)=\frac{(\Delta_E F)(x_0,t_0)}{F(x_0,t_0)}\ge0$.
We may assume $t_0>0$. Since $(x_0,t_0)$ is a maximum point with $t_0>0$, one has $\partial_t w(x_0,t_0)\geq0$. On the other hand,
\begin{align*}
\frac{\partial w}{\partial t}(x_0,t_0)
=& \frac{\partial F}{\partial t}(x_0,t_0) - \phi'(t_0) - \varepsilon \\
=& - \frac{1}{2} (\Delta_E\log F)(x_0,t_0) - \frac{n}{2\log|\alpha|} - \phi'(t_0) - \varepsilon\\
\leq& - \frac{n}{2\log|\alpha|} - \phi'(t_0) - \varepsilon \leq -\varepsilon<0.
\end{align*}
The lower bound follows in the same way by applying the argument to $v:=F-\psi+\varepsilon t$.

Applying these barriers with
$$ \phi(t) := \max_E F_0 - \frac{n}{2\log|\alpha|} t, \qquad \psi(t) := \min_E F_0 - \frac{n}{2\log|\alpha|} t, $$
and letting $T\uparrow T_{\rm max}$ yields
$$ \min_E F_0 - \frac{n}{2\log|\alpha|} t \leq F(t) \leq \max_E F_0 - \frac{n}{2\log|\alpha|}t, $$
on $E$ for every $t\in[0,T_{\rm max})$.

Suppose, for contradiction, that $T_{\rm max}<\infty$. Since $0<|\alpha|<1$, the barrier estimate gives
$$0<\min_E F_0\leq F(t)\leq \max_E F_0-\frac{n}{2\log|\alpha|}\,T_{\rm max}<+\infty$$
for $0\leq t<T_{\rm max}$. Thus \eqref{eq:par-scalar} remains uniformly parabolic up to $T_{\rm max}$. Equivalently, setting $u=\log F$,
$$\frac{\partial u}{\partial t}=-\frac12e^{-u}\Delta_Eu-\frac{n}{2\log|\alpha|}e^{-u},$$
and the coefficient $e^{-u}/2$ is uniformly bounded above and away from zero.
In local coordinates, the equation for $u$ is a uniformly parabolic equation in non-divergence form whose leading coefficient is
$$
a(x,t)=\frac12e^{-u(x,t)}.
$$
The preceding $\mathcal C^0$ estimate implies that $a$ is uniformly bounded above and away from zero, while the zeroth-order term is uniformly bounded. The Krylov--Safonov estimate therefore gives a uniform $\mathcal C^\alpha$ bound for $u$ up to $T_{\rm max}$; see, for instance, \cite[Chapter~IV]{lieberman}. Since $a=\frac12e^{-u}$ is then uniformly $\mathcal C^\alpha$, the parabolic Schauder estimates and bootstrapping yield uniform $\mathcal C^k$ bounds for $u$, and hence for $F$, for every $k$; see \cite[Chapter~V]{lieberman}. The standard continuation criterion for quasilinear uniformly parabolic equations then extends the solution beyond $T_{\rm max}$, contradicting maximality. Hence $T_{\rm max}=+\infty$.
\end{proof}

\begin{remark}
The metrics $\omega(t)$ in \Cref{prop:crf} remain flat along the leaves, and $\sqrt{-1}\partial\overline\partial\omega(t)=\sqrt{-1}\partial\overline\partial\omega_0$ remains a constant negative multiple of $\sqrt{-1}\,\zeta\wedge\bar\zeta\wedge\sqrt{-1}\,\Lambda\,\eta\wedge\bar\eta$.
\end{remark}

\begin{remark}
For $\omega_0=\sqrt{-1}\,a_0\,\zeta\wedge\bar\zeta+\sqrt{-1}\,c_0\Lambda\,\eta\wedge\bar\eta$, the explicit solution is
$$ \omega(t) = \omega_0 - \frac{n}{2\log|\alpha|}\,t\, \sqrt{-1}\,\zeta\wedge\bar\zeta, $$
immortal but not eternal, with $\operatorname{Ric}(\omega(t))=\frac{n}{2\log|\alpha|} \, \sqrt{-1}\,\zeta\wedge\bar\zeta$ constant.
\end{remark}

\subsection{Gromov--Hausdorff convergence}

\begin{theorem}\label{thm:crf-enoki}
Let $S$ be an Enoki surface and let $D$ be its maximal reduced divisor of rational curves. On $A:=S\setminus D$, consider the smooth Hermitian metric
$$ \omega_0 = \sqrt{-1}\,(F_0\circ\pi)\,\zeta\wedge\bar\zeta+\sqrt{-1}\,c_0\,\Lambda\,\eta\wedge\bar\eta $$
with $F_0$ a smooth positive function on $E$ and $c_0>0$. Then there exists a smooth solution $\tilde\omega(t)$, defined for all $t\geq0$, of the normalised Chern--Ricci equation
$$
\frac{\partial\tilde\omega(t)}{\partial t}
=
-\operatorname{Ric}(\tilde\omega(t))-\tilde\omega(t),
\qquad
\tilde\omega(0)=\omega_0.
$$
Moreover,
$$ \tilde\omega(t) \to -\frac{n}{2\log|\alpha|}\,\sqrt{-1}\,\zeta\wedge\bar\zeta $$
as $t\to+\infty$, uniformly on $A$ with respect to the reference metric $\hat\omega:=\sqrt{-1}\,\zeta\wedge\bar\zeta+\sqrt{-1}\,\Lambda\,\eta\wedge\bar\eta$. For every $R>0$, the compact sublevel $K_R:=\{\Lambda\,|\tilde w|^2 \leq R\}$, where $\tilde w$ is defined in \eqref{eq:tilde-w}, satisfies
$$ (K_R, d_t) \stackrel{\rm GH}{\to} \left(E, \sqrt{-\frac{n}{2\log|\alpha|}}\, d_{g_E}\right), $$
where $d_t$ denotes the ambient distance on $A$ induced by $\tilde\omega(t)$, restricted to $K_R\times K_R$. The limiting metric space is associated with the flat metric $-\frac{n}{2\log|\alpha|} \,g_E$, where $g_E$ is the Riemannian metric associated with $\sqrt{-1}\,\zeta\wedge\bar\zeta$.
\end{theorem}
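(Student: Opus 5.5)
We reduce the normalised flow to the unnormalised one of \Cref{prop:crf} by the standard rescaling: if $\omega(s)$ solves $\partial_s\omega=-\operatorname{Ric}(\omega)$, then $\tilde\omega(t):=e^{-t}\omega(e^t-1)$ solves the normalised equation with $\tilde\omega(0)=\omega_0$. This uses only that $\operatorname{Ric}$ is invariant under constant rescaling. Taking $\omega(s)=\sqrt{-1}(F(s)\circ\pi)\zeta\wedge\bar\zeta+\sqrt{-1}c_0\Lambda\eta\wedge\bar\eta$ from \Cref{prop:crf}, we obtain
$$\tilde\omega(t)=\sqrt{-1}\,e^{-t}\bigl(F(e^t-1)\circ\pi\bigr)\,\zeta\wedge\bar\zeta+\sqrt{-1}\,c_0e^{-t}\Lambda\,\eta\wedge\bar\eta,$$
which is defined for all $t\geq0$. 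The barrier estimate in the proof of \Cref{prop:crf} gives
$$\min_EF_0-\frac{n}{2\log|\alpha|}s\leq F(s)\leq\max_EF_0-\frac{n}{2\log|\alpha|}s,$$
so $\bigl|e^{-t}F(e^t-1)+\frac{n}{2\log|\alpha|}\bigr|\leq Ce^{-t}$ on $E$. The vertical coefficient is $c_0e^{-t}$. Since $\hat\omega$ has unit coefficients in the same coframe, the $\hat\omega$-norm of $\tilde\omega(t)-\bigl(-\frac{n}{2\log|\alpha|}\bigr)\sqrt{-1}\zeta\wedge\bar\zeta$ is bounded by $Ce^{-t}$ uniformly on $A$. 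This proves the smooth-coefficient convergence claim.

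For the Gromov--Hausdorff statement, set $\kappa:=-\frac{n}{2\log|\alpha|}>0$ and use $\pi|_{K_R}\colon K_R\to E$, which is surjective because $K_R$ contains the image of the zero section $\tilde w=0$. We aim for two inequalities. First, $\tilde\omega(t)\geq e^{-t}F(e^t-1)\,\pi^*\omega_E\geq(\kappa-Ce^{-t})\pi^*\omega_E$, so $\pi$ is $\sqrt{\kappa-Ce^{-t}}$-Lipschitz from $d_t$ to $d_{g_E}$. This gives
$$d_t(x,y)\geq(\kappa-Ce^{-t})^{1/2}\,d_E(\pi x,\pi y).$$
Second, for $x,y\in K_R$ we build a path: move from $x$ inside its fibre to the point $\tilde w=0$, follow the horizontal lift of a $g_E$-geodesic from $\pi x$ to $\pi y$, then move inside the fibre of $y$ to $y$.

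The horizontal lift must be taken with respect to the coframe, i.e.\ inside $\{\tilde w=0\}$, or more cleanly along $\eta=0$. The locus $\{\tilde w=0\}$ is a smooth section on which $\eta=\nabla^{1,0}\tilde w=0$. Along it only the $\zeta\wedge\bar\zeta$ part contributes, so its length is at most $(\kappa+Ce^{-t})^{1/2}d_E(\pi x,\pi y)$. Each fibre segment stays in $K_R$ if taken radially in $\tilde w$. It has $\tilde\omega(t)$-length at most $(c_0e^{-t})^{1/2}\sup_{K_R}(\Lambda^{1/2}|\tilde w|)\leq(c_0e^{-t}R)^{1/2}$, because on a fibre $\eta=dw=d\tilde w$ and $\Lambda$ is constant there. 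Hence
$$d_t(x,y)\leq(\kappa+Ce^{-t})^{1/2}d_E(\pi x,\pi y)+2(c_0Re^{-t})^{1/2}.$$
The two bounds together show that $\pi$ is an $\varepsilon_t$-isometry onto $(E,\sqrt\kappa\,d_{g_E})$, with $\varepsilon_t\to0$ because $E$ has bounded diameter. This gives the GH convergence.

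The main obstacle I expect is the lift step. One must check that the ambient distance $d_t$, which could use paths leaving $K_R$, is handled correctly. This is fine for the lower bound, since the Lipschitz estimate holds on all of $A$. For the upper bound we only need some path. One must also confirm that $\{\tilde w=0\}$ is well-defined on $A$ with $\eta|_{\{\tilde w=0\}}=0$ as forms pulled back to the section. This follows from $\eta=\partial\tilde w+\frac{q'}{2}\tilde w\zeta$, but the $\bar\partial$-part of $d\tilde w$ (coming from $\partial s/\partial\bar z$) does not vanish. So I would estimate the pulled-back metric directly: on the section the tangent vector has components only in the $\zeta$ direction in the coframe. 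If that fails, we bound the extra vertical term by $c_0e^{-t}\Lambda|\bar\partial s|^2$, which is uniformly bounded on compact $E$ and still decays.
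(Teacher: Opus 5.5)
Your argument is correct, but only once the fallback in your last paragraph is taken as the actual argument. The primary claim is false. On $\Sigma:=\{\tilde w=0\}$, i.e.\ $w=-s(z)$, one has $dw|_\Sigma=-\partial_z s\,dz-\partial_{\bar z}s\,d\bar z$ and $B|_\Sigma=\partial_z s$, so
$$\eta|_\Sigma=-\frac{\partial s}{\partial\bar z}\,d\bar z=-\bar z\,\frac{\partial s}{\partial\bar z}\,\bar\zeta ,$$
which is nonzero in general. So $\Sigma$ is not horizontal, and its tangent vectors do have $\eta$-components.

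What saves you is the following. Write $\tilde F(t):=e^{-t}F(e^t-1)$. For real $v\in T\Sigma$,
$$g_t(v,v)=\Bigl(\tilde F(t)+c_0e^{-t}\Lambda\,|z|^2\Bigl|\frac{\partial s}{\partial\bar z}\Bigr|^2\Bigr)\,g_E(\pi_*v,\pi_*v).$$
The function $\Lambda|z|^2|\partial_{\bar z}s|^2$ is invariant under $z\mapsto\alpha z$. Indeed, differentiating $s(\alpha z)=z^ns(z)-P(z)$ in $\bar z$ gives $\bar\alpha\,(\partial_{\bar z}s)(\alpha z)=z^n\partial_{\bar z}s(z)$, and $F^*\Lambda=|z|^{-2n}\Lambda$. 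Hence it descends to the compact curve $E$, so it is bounded. The section path therefore has length at most $(\kappa+Ce^{-t})^{1/2}d_{g_E}(\pi x,\pi y)$.

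Two small corrections. The literal quantity $\Lambda|\partial_{\bar z}s|^2$ you wrote is not invariant, since it picks up $|\alpha|^{-2}$; the factor $|z|^2$ (measuring $\bar\partial s$ against $\bar\zeta$) is needed. Also, with $g_\omega(u,v)=\omega(u,Jv)$ the fibre segments have length at most $\sqrt{2c_0R}\,e^{-t/2}$; the missing $\sqrt2$ is harmless.

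With this fix, your route differs from the paper's in the upper bound. The paper lifts a minimising $h_t$-geodesic ($h_t=F(t)g_E$) horizontally, along $\ker\eta$, starting on $\Sigma$. Because $\pi$ is a Riemannian submersion onto $(E,h_t)$, the lift has exactly the base length. But the lift leaves $\Sigma$, so the paper needs three extra steps:
\begin{itemize}
\item a Gr\"onwall argument in trivialising charts to keep the lifts in a fixed $K_{R'}$;
\item a second fibre segment inside $K_{R'}$;
\item a separate comparison $\delta_t$ between $d_{h_t}$ and $\sqrt{c}\,d_{g_E}$.
\end{itemize}
You instead run along the compact section itself. It is not horizontal, but its vertical contribution is weighted by $c_0e^{-t}$, so you only pay a multiplicative factor $\sqrt{1+O(e^{-t})}$, harmless because $E$ has finite diameter. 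You also compare directly with $\kappa g_E$. You get the exponential $\mathcal C^0$ rate by rescaling the barriers of \Cref{prop:crf}, which are exactly the paper's $F_\pm$, without redoing the maximum principle for the normalised equation.

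The paper's horizontal lift gives a finite-time comparison with $(E,h_t)$ with no multiplicative error, at the cost of the lift-control step. Yours is shorter and fully sufficient for the stated convergence.
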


\begin{proof}
By \Cref{prop:crf}, the Chern--Ricci flow starting at $\omega_0$ admits a solution $\omega(s)$ defined for every $s\geq 0$. The relation
$$ \tilde\omega(t):=e^{-t}\omega(e^t-1) $$
gives a solution of the \emph{normalised} Chern--Ricci flow for every $t\geq 0$.
The solution has the form
$$ \tilde\omega(t) = \sqrt{-1}\,( F(t) \circ \pi) \, \zeta\wedge\bar\zeta + \sqrt{-1}\, c_0e^{-t}\, \Lambda\, \eta\wedge\bar\eta, $$
where $F(t)$ is a smooth positive function on $E$ satisfying
$$ \frac{\partial F}{\partial t} = - \frac{1}{2} \Delta_E \log F - F - \frac{n}{2\log|\alpha|}. $$
Set $c:=-n/(2\log|\alpha|)>0$. The normalised equation becomes
$$\frac{\partial F}{\partial t}=-\frac12\Delta_E\log F-F+c.$$
The functions
$$F_-(t):=c+e^{-t}(\min_E F_0-c),\qquad F_+(t):=c+e^{-t}(\max_E F_0-c)$$
are, respectively, a subsolution and a supersolution.
They in fact solve the normalised equation exactly.
By the same maximum principle argument as in Proposition \ref{prop:crf}, applied to the normalised equation, one obtains $F_-(t)\leq F(t)\leq F_+(t)$ and
$$\|F(t)-c\|_{\mathcal C^0(E)}\leq e^{-t}\|F_0-c\|_{\mathcal C^0(E)}.$$
Set $F_-:=\min\{\min_E F_0,c\}$ and $F_+:=\max\{\max_E F_0,c\}$. Then $0<F_-\leq F(t)\leq F_+$ for all $t\geq0$. Thus $F$ remains uniformly bounded above and away from zero. Applied to $F-c$, standard parabolic estimates on time-translated unit intervals,  interpolating with the preceding $\mathcal C^0$ convergence and bootstrapping, give $F(t)\to c$ in $\mathcal C^\infty(E)$. The $\mathcal C^0$ estimate also supplies the exponential rate required for the metric estimate:
\begin{align*}
\left| \tilde\omega(t) - \sqrt{-1}\, \left(-\frac{n}{2\log|\alpha|}\right)\, \zeta\wedge\bar\zeta \right|_{\hat\omega}
&\leq \left| F(t) - \left(-\frac{n}{2\log|\alpha|}\right) \right| + c_0 e^{-t} \\
&\leq (\|F_0-c\|_{\mathcal C^0(E)}+c_0)e^{-t}
\end{align*}
on $A$. Thus $\tilde\omega(t) \to -\sqrt{-1}\,\frac{n}{2\log|\alpha|}\, \zeta\wedge\bar\zeta$ uniformly on $A$ with respect to $\hat\omega$.

It remains to prove the Gromov--Hausdorff convergence of $K_R$. Write $g_t:=g_{\tilde\omega(t)}$ for the associated Riemannian metric. Since $\Psi:=\Lambda|\tilde w|^2$ is a proper exhaustion of $A$, the sublevel $K_R$ is compact and $\pi(K_R)=E$.
We show that $\pi\colon(K_R,d_t)\to(E,h_t)$, with $h_t:=F(t)\,g_E$, is an $\varepsilon_t$-Gromov--Hausdorff
approximation with $\varepsilon_t\to0$. Together with $h_t\to (-\frac{n}{2\log|\alpha|})\,g_E$, this will prove the claim.

In the coframe $(\zeta,\eta)$, the $\zeta$-directions and the $\eta$-directions are orthogonal, and $\pi$ is a Riemannian submersion. Hence $\pi$ is $1$-Lipschitz: for $p,q\in K_R$ one has
$$ d_{h_t}(\pi(p),\pi(q)) \leq d_t(p,q). $$

On a fibre $\pi^{-1}(z_0)$, one has $z=z_0$ constant, hence $\eta\vert_{\pi^{-1}(z_0)}=dw=d\tilde w$ and the induced Riemannian metric is $2c_0e^{-t}\Lambda(z_0)\,|d\tilde w|^2$.
We have
\begin{align*}
\operatorname{diam}_{g_t} \left(K_R \cap \pi^{-1}(z_0)\right)
&\leq 2 \sqrt{\frac{R}{\Lambda(z_0)}} \cdot \sqrt{2c_0 e^{-t} \Lambda(z_0)} \\
&= 2\sqrt{2c_0R}\, e^{-t/2} =: \varepsilon_R(t),
\end{align*}
uniformly in $z_0$, and $\varepsilon_R(t)\to0$ as $t\to+\infty$.

Set $X:=z\partial_z-zB\,\partial_w$, the horizontal $(1,0)$-vector satisfying $\zeta(X)=1$ and $\eta(X)=0$. Recall that $B=\frac{\partial s}{\partial z}+\frac{q'}{2z}\tilde w$. In every local trivialisation of the line bundle $L\to E$, the real horizontal lift equation for the fibre coordinate has the form
$$\frac{d\tilde w}{d\sigma}=a(\sigma)\tilde w+b(\sigma),$$
where the coefficients are smooth functions of the base variables. Fix a finite trivialising cover of the compact curve $E$, together with a Lebesgue number for a subordinate finite refinement. The transition functions and the corresponding coefficients are uniformly bounded on this cover. Moreover, the $g_E$-length of every minimising $h_t$-geodesic is bounded by
$$L_0:=\sqrt{\frac{F_+}{F_-}}\,\operatorname{diam}(E,g_E),$$
uniformly in $t$. Subdividing such a geodesic into a uniformly bounded number of subarcs lying in the chosen trivialising charts and applying Gr\"onwall successively gives a constant $C'>0$, independent of $t$ and of the geodesic, such that $|\tilde w|\leq C'$ along every horizontal lift starting on the chosen smooth section $\{\tilde w=0\}$. Consequently, since $\Psi=\Lambda|\tilde w|^2$ is globally defined and proper, there exists $R'>R$, independent of $t$, such that every such lift remains in $K_{R'}$.

Take $p,q\in K_R$. Join $p$ to the point $p_0$ of the section $\{\tilde w=0\}$ over $\pi(p)$ by a curve of length at most $\varepsilon_R(t)$. Let $\gamma$ be a minimising $h_t$-geodesic from $\pi(p)$ to $\pi(q)$. Lift $\gamma$ horizontally from $p_0$ and parametrise the lift by $g_E$-arclength $\sigma\in[0,\ell]$, where
$$\ell\leq\frac{1}{\sqrt{F_-}}\,\operatorname{diam}(E,h_t)\leq\sqrt{\frac{F_+}{F_-}}\,\operatorname{diam}(E,g_E),$$
uniformly in $t$.
By the preceding estimate, the lift remains in $K_{R'}$ and ends at $q_0$ over $\pi(q)$. Its length is at most $d_{h_t}(\pi(p),\pi(q))$. Finally, join $q_0$ to $q$ by a curve of length at most $\varepsilon_{R'}(t)$.
Combining these estimates gives
$$ d_t(p,q) \leq d_{h_t}(\pi(p),\pi(q)) + \varepsilon_R(t) + \varepsilon_{R'}(t), $$
where $\varepsilon_{R}(t)\to 0$ and $\varepsilon_{R'}(t)\to 0$ as $t\to+\infty$.
Since $h_t=F(t)g_E\to c\,g_E$ uniformly, set
$$\delta_t:=\sup_{x,y\in E}\left|d_{h_t}(x,y)-\sqrt c\,d_{g_E}(x,y)\right|,$$
so that $\delta_t\to0$. Combining the preceding upper estimate with the $1$-Lipschitz lower estimate gives
$$\sup_{p,q\in K_R}\left|d_t(p,q)-\sqrt c\,d_{g_E}(\pi(p),\pi(q))\right|\leq\varepsilon_R(t)+\varepsilon_{R'}(t)+\delta_t\longrightarrow0.$$
Since $\pi(K_R)=E$, the map $\pi:K_R\to E$ is a Gromov--Hausdorff approximation whose distortion tends to zero. This proves the claim.
\end{proof}

\begin{remark}
The limiting metric also satisfies
\begin{align*}
\operatorname{Vol}\left(E, -\frac{n}{2\log|\alpha|} \,g_E\right)
&= \left(-\frac{n}{2\log|\alpha|}\right) \cdot \left(4\pi(-\log|\alpha|)\right) \\
&= 2\pi n = 2\pi b_2(S),
\end{align*}
which reflects the topological information carried by $b_2(S)$.
\end{remark}

\begin{remark}
The restriction to the compact exhaustion $\{K_R\}_{R>0}$ is essential. For every finite $t$, the metric $\tilde\omega(t)$ is complete: its horizontal coefficient is bounded below by $F_->0$, while its vertical coefficient is $c_0e^{-t}>0$, so completeness follows from \Cref{prop:metriche-enoki}. Since $A$ is non-compact, the complete Riemannian manifold $(A,d_t)$ has infinite diameter. It therefore cannot converge globally, in the compact Gromov--Hausdorff topology, to the compact elliptic base.
\end{remark}

\section{Kato surfaces of intermediate type}

Let $S$ be a Kato surface of intermediate type and let $D$ be its maximal reduced divisor of rational curves. For surfaces of index one, Dloussky--Oeljeklaus--Toma \cite{dloussky-oeljeklaus-toma} identify the universal cover of $S\setminus D$ with $X=\mathbb H_\ell\times\mathbb C$, where $\mathbb H_\ell=\{\Re w<0\}$, and put the deck action into an affine triangular normal form. The higher-index case is obtained from this model by a finite cyclic extension of the deck group. Set $r:=\operatorname{ind}(S)$. We recall the form needed below.

 \begin{propo}\label{prop:higher-index-green-model}
There exists an intermediate surface $S_1$ of index one, with maximal reduced divisor of rational curves $D_1$, and an unramified cyclic covering
$$
A_1:=S_1\setminus D_1\longrightarrow A:=S\setminus D
$$
of degree $r$. On their common universal cover $X=\mathbb H_\ell\times\mathbb C$ one may choose coordinates so that the deck group $\Gamma_1$ of $A_1$ is generated by
$$
T(w,z)=(w+2\pi\sqrt{-1},z),\qquad
G(w,z)=(kw,\lambda z+h(w)),
$$
where $k=k(S)\in\mathbb Z$, $k\geq2$, and the full deck group $\Gamma$ of $A$ is generated by $\Gamma_1$ and
$$
\varphi(w,z)=\left(w-\frac{2\pi\sqrt{-1}}{r},\xi z\right),
\qquad
\xi:=e^{2\pi\sqrt{-1}\sigma_0/r},
$$
where $\sigma_0$ is the largest exponent occurring in the polynomial $Q$ of the associated index-one normal form; see \cite[Sections~3.1--3.2]{oeljeklaus-toma-moduli}. In particular, $\xi^r=1$ and $|\xi|=1$.
Here $r$ divides $k-1$, and
$$
\varphi^r=T^{-1},\ 
\varphi T\varphi^{-1}=T,\ 
\varphi G\varphi^{-1}=T^{(k-1)/r}G.
$$
In particular $G\varphi G^{-1}=\varphi^k$, so that
$\Gamma\simeq\mathbb Z[1/k]\rtimes\mathbb Z$. Moreover, $\varphi$ preserves
$s:=-\Re w$.
\end{propo}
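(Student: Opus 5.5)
The plan is to assemble this proposition from the structure theory of \cite{dloussky-oeljeklaus-toma} and \cite{oeljeklaus-toma-moduli}, then verify the stated relations by direct computation.

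\textbf{Step 1: the index-one covering.} For an intermediate surface of index $r$, the twisted logarithmic $1$-form (or the canonical foliation's conormal class) has a monodromy of order $r$. Passing to the associated unramified cyclic cover of $S$ of degree $r$ produces an intermediate Kato surface $S_1$ of index one. Its rational curves are exactly the preimages of those of $S$, so $D_1$ is the preimage of $D$. Restricting gives the degree-$r$ covering $A_1\to A$, and the two open surfaces share the universal cover $X=\mathbb H_\ell\times\mathbb C$.

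\textbf{Step 2: the deck group of $A_1$.} I will quote the index-one normal form of \cite[Section~3.1]{oeljeklaus-toma-moduli}. There $\Gamma_1$ is generated by the translation $T$ and by $G(w,z)=(kw,\lambda z+h(w))$, where $h$ is built from the polynomial $Q$ and the exponential. The extension to $\Gamma$ is the content of \cite[Section~3.2]{oeljeklaus-toma-moduli}: the generator of the cyclic deck group of $A_1\to A$ lifts to $\varphi(w,z)=(w-2\pi\sqrt{-1}/r,\xi z)$. The factor $\xi=e^{2\pi\sqrt{-1}\sigma_0/r}$ is forced by requiring that $\varphi$ normalise $G$, which in turn requires $h(w-2\pi\sqrt{-1}/r)=\xi\,h(w)$ up to the allowed coordinate change.

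\textbf{Step 3: the relations.} These are direct computations:
\begin{itemize}
\item $\varphi^r(w,z)=(w-2\pi\sqrt{-1},\xi^r z)=T^{-1}(w,z)$.
\item $\varphi$ commutes with $T$.
\item $\varphi G\varphi^{-1}(w,z)=(kw-2\pi\sqrt{-1}(k-1)/r,\ \lambda z+\xi h(w+2\pi\sqrt{-1}/r))$. This equals $T^{(k-1)/r}G$ precisely when $r\mid k-1$ and $h$ has the equivariance property from Step 2.
\end{itemize}
Writing $T=\varphi^{-r}$, the relation $\varphi G\varphi^{-1}=\varphi^{-(k-1)}G$ rearranges to $G^{-1}\varphi G=\varphi^{k}$ after inverting. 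I need to be careful here: the orientation convention has to be checked, possibly replacing $G$ by $G^{-1}$ to obtain $G\varphi G^{-1}=\varphi^k$ as stated.

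\textbf{Step 4: structure of $\Gamma$ and invariance of $s$.} The group $\Gamma$ is therefore generated by $\varphi$ and $G$ with conjugation acting as multiplication by $k$. This is the Baumslag--Solitar group $BS(1,k)\simeq\mathbb Z[1/k]\rtimes\mathbb Z$, consistent with $\pi_1(A)$. Finally, $\varphi$ only shifts $\Im w$, so $s=-\Re w$ is preserved.

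\textbf{Main obstacle.} The main difficulty is the divisibility $r\mid k-1$ together with the precise form of $\xi$ in Step 2. Both rely on the detailed description in \cite{oeljeklaus-toma-moduli} of how the index-$r$ normal form descends: the exponents of $Q$ must be congruent mod $r$ so that $h$ transforms by the single character $\xi$. I would cite that description rather than rederive it.
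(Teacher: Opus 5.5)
Your route is the same as the paper's. You quote Oeljeklaus--Toma for the index-one cyclic cover and the form of $\varphi$. You then use property $(I_r)$ of their normal form: $r\mid k-1$, $a_0=0$, and $m\equiv\sigma_0 \pmod r$ for every exponent with $b_m\neq0$. This gives $h(w-2\pi\sqrt{-1}/r)=\sum_m b_m e^{-mw}e^{2\pi\sqrt{-1}m/r}=\xi\,h(w)$, after which the relations are checked by hand. That identity follows exactly from the congruence you name under ``Main obstacle''. No ``allowed coordinate change'' is involved, so you should simply write out this one-line computation.

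The direct computations in Step 3, however, contain two errors, and your proposed remedy is not legitimate.

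\emph{The sign.} Since $\varphi^{-1}(w,z)=(w+2\pi\sqrt{-1}/r,\xi^{-1}z)$, the first coordinate of $\varphi G\varphi^{-1}(w,z)$ is $k(w+2\pi\sqrt{-1}/r)-2\pi\sqrt{-1}/r=kw+2\pi\sqrt{-1}(k-1)/r$. With your minus sign the result would be $T^{-(k-1)/r}G$, which contradicts the conclusion you draw. With the correct sign and $\xi\,h(w+2\pi\sqrt{-1}/r)=h(w)$, one gets exactly $T^{(k-1)/r}G=\varphi^{-(k-1)}G$.

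\emph{The rearrangement.} The relation $\varphi G\varphi^{-1}=\varphi^{-(k-1)}G$ is equivalent to $\varphi^kG=G\varphi$, that is, $G\varphi G^{-1}=\varphi^k$, precisely as stated. Your rearrangement $G^{-1}\varphi G=\varphi^k$ is false. On the $w$-coordinate, $G^{-1}\varphi G$ is translation by $-2\pi\sqrt{-1}/(kr)$, while $\varphi^k$ is translation by $-2\pi\sqrt{-1}k/r$. As a direct check, $G\varphi G^{-1}(w,z)=(w-2\pi\sqrt{-1}k/r,\xi z)$, and $\xi^k=\xi$ because $r\mid k-1$.

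Replacing $G$ by $G^{-1}$ is therefore neither needed nor allowed. $G$ is fixed by the normal form, and it later enters through $G^*s=ks$. With these corrections your argument coincides with the paper's proof.
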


\begin{proof}
For $r=1$ this is the affine Green normal form of
\cite{dloussky-oeljeklaus-toma}. For $r>1$, Oeljeklaus--Toma
\cite[Section~3.2 and Remark~4.4]{oeljeklaus-toma-moduli} show that every
intermediate surface of index $r$ is obtained as a cyclic quotient of an
index-one model. In their coordinates $(z,w)\in\mathbb C\times\mathbb H_\ell$,
the additional normalising transformation has the form
$$
(z,w)\longmapsto
\left(e^{2\pi\sqrt{-1}\sigma_0/r}z,
w-\frac{2\pi\sqrt{-1}}{r}\right).
$$
Interchanging the two coordinates gives the form above. 
The normal form satisfies the property $(I_q)$ of
\cite[Section~3.2, p.~329]{oeljeklaus-toma-moduli}, with $q=r$ in our notation. Thus, writing
$$
h(w)=Q(e^{-w}),\qquad Q(\zeta)=\sum_{m=\ell}^{\sigma} b_m\zeta^m,
$$
one has $a_0=0$ and
$
r\mid(k-1),\ r\mid(m-m')
$
whenever $b_m b_{m'}\neq0$. Since $\sigma_0$ is an exponent with $b_{\sigma_0}\neq0$, every exponent $m$ with $b_m\neq0$ satisfies
$
m\equiv\sigma_0\pmod r.
$
Therefore, with $\xi=e^{2\pi\sqrt{-1}\sigma_0/r}$,
$$
h\left(w-\frac{2\pi\sqrt{-1}}{r}\right)
=
\sum_m b_m e^{-mw}e^{2\pi\sqrt{-1}m/r}
=
\xi\,h(w).
$$
It follows that
$
\varphi G\varphi^{-1}=T^{(k-1)/r}G.
$
Since $\xi^r=1$, one also has $\varphi^r=T^{-1}$. Therefore
$$
\varphi G\varphi^{-1}=\varphi^{-(k-1)}G,
$$
equivalently
$$
G\varphi G^{-1}=\varphi^k.
$$
The resulting presentation is the standard presentation of
$\mathbb Z[1/k]\rtimes\mathbb Z$. Finally, $\varphi^*s=s$ is immediate.
\end{proof}

We work henceforth on this common universal cover and with the full deck group $\Gamma$. The construction below produces a $\Gamma$-equivariant Hermitian metric and an explicit solution of the normalised Chern--Ricci flow on $S\setminus D$.

\subsection{The affine Green model}
Each element $\gamma\in\Gamma$ has the affine triangular form
\begin{equation}\label{eq:general-affine-element}
 \gamma(w,z)=\bigl(a_\gamma w+b_\gamma,\lambda_\gamma z+h_\gamma(w)\bigr),
\end{equation}
where $a_\gamma>0$, $\lambda_\gamma\in\mathbb{C}^*$, and $h_\gamma \colon \mathbb H_\ell \to \mathbb C$ is holomorphic.
For the distinguished elements of the index-one subgroup we write
$$ T(w,z)=(w+2\pi\sqrt{-1},z),
\quad \text{ and } \quad
G(w,z)=(kw,\lambda z+h(w)), $$
where $k=k(S)\in\mathbb Z$, $k\geq2$. If $r>1$, the full group also contains the transformation $\varphi$ of \Cref{prop:higher-index-green-model}.
Set $s:=-\Re w>0$ and $\theta:=\Im w$, so that $w=-s+\sqrt{-1}\theta$.
Then
\begin{equation}\label{eq:s-transforms}
 T^*s=s,
 \qquad
 G^*s=ks.
\end{equation}
Define the transverse Green form by
\begin{equation*}
 \omega_{\rm tr}:=\ddbar(-\log s).
\end{equation*}
Since $s=-(w+\bar w)/2$, we have $\partial s=-\tfrac12\,dw$ and $\overline\partial s=-\tfrac12\,d\bar w$. Hence
$$ \omega_{\rm tr} = \sqrt{-1} \partial\overline\partial (-\log s) =
 \frac{\sqrt{-1}}{4s^2}\,dw\wedge d\bar w. $$
The transformation law of $s$ gives
\begin{equation*}
 T^*\omega_{\rm tr}=\omega_{\rm tr},
 \quad
 G^*\omega_{\rm tr}
 =
 \ddbar(-\log(ks))
 =
 \ddbar(-\log s)
 =
 \omega_{\rm tr}.
\end{equation*}
For $r>1$, one also has $\varphi^*s=s$ and hence $\varphi^*\omega_{\rm tr}=\omega_{\rm tr}$. Thus $\omega_{\rm tr}$ is invariant under the full deck group and descends to $S\setminus D$.

\subsection{The cut-off and the equivariant vertical coform}
The vertical tangent distribution is $V:=\mathbb{C}\cdot\partial_z\subset T_X^{1,0}$.
It is preserved by $\Gamma$: for an element
$\gamma\in\Gamma$ as in \eqref{eq:general-affine-element}, one has
\begin{equation*}
 d\gamma(\partial_z)=\lambda_\gamma\partial_z.
\end{equation*}
In particular, $dT(\partial_z)=\partial_z$ and $dG(\partial_z)=\lambda\partial_z$; when $r>1$, $d\varphi(\partial_z)=\xi\partial_z$.

Since the action of $\Gamma$ on $X$ is free and properly discontinuous, the
quotient map $q:X\to X/\Gamma=A=S\setminus D$ is a smooth covering. We fix once and for all a standard cut-off function for this covering: a smooth function
$\chi\in\mathcal C^\infty(X)$, $\chi\geq0$, such that the family
$\{\chi\circ\gamma\}_{\gamma\in\Gamma}$ is locally finite and
\begin{equation}\label{eq:cutoff-sum}
 \sum_{\gamma\in\Gamma} \chi(\gamma p)=1
\end{equation}
for every $p\in X$. Such a function is obtained in the usual way by lifting a locally
finite partition of unity on the quotient to chosen sheets over evenly covered
coordinate neighbourhoods and extending by zero; proper discontinuity gives
the required local finiteness.

Starting with the Euclidean Hermitian metric $g_0=|dw|^2+|dz|^2$, we set
\begin{equation*}
 g:=\sum_{\gamma\in\Gamma}\chi(\gamma p)\,\gamma^*g_0.
\end{equation*}
The sum is locally finite, hence $g$ is a smooth Hermitian metric. It is positive because the coefficients $\chi(\gamma p)$ are non-negative, at least
one of them is positive at each point, and every $\gamma^*g_0$ is positive.
It is also $\Gamma$-invariant: for $\eta\in\Gamma$,
\begin{align*}
 (\eta^*g)_p
 &=
 \sum_{\gamma\in\Gamma}\chi(\gamma\eta p)(\gamma\eta)^*g_0|_p \\
 &=
 \sum_{\delta\in\Gamma}\chi(\delta p)\delta^*g_0|_p
 =
 g_p,
\end{align*}
where $\delta=\gamma\eta$.

Let $H\subset T_X^{1,0}$ be the $g$-orthogonal complement of $V$. Since
both $g$ and $V$ are $\Gamma$-invariant, so is $H$. There is a unique
smooth $(1,0)$-form $\Theta$ such that $\Theta(\partial_z)=1$ and $\Theta|_H=0$. Write
\begin{equation*}
 \Theta=dz+B\,dw.
\end{equation*}
In local coordinates, write
\begin{equation*}
 g
 =
 g_{w\bar w}\,dw\,d\bar w
 +
 g_{w\bar z}\,dw\,d\bar z
 +
 g_{z\bar w}\,dz\,d\bar w
 +
 g_{z\bar z}\,dz\,d\bar z.
\end{equation*}
A horizontal vector has the form $\partial_w+\alpha\partial_z$. Orthogonality
to $\partial_z$ gives $g_{w\bar z}+\alpha g_{z\bar z}=0$, hence
$$ \alpha=-\frac{g_{w\bar z}}{g_{z\bar z}}. $$
Since $\Theta(\partial_w+\alpha\partial_z)=B+\alpha$, the condition $\Theta|_H=0$ gives
\begin{equation*}
 B=\frac{g_{w\bar z}}{g_{z\bar z}}.
\end{equation*}

For an affine element $\gamma$ as in \eqref{eq:general-affine-element},
\begin{equation*}
 d(a_\gamma w+b_\gamma)=a_\gamma\,dw,
 \qquad
 d(\lambda_\gamma z+h_\gamma(w))
 =
 \lambda_\gamma\,dz+h_\gamma'(w)\,dw.
\end{equation*}
Thus
$$
\gamma^*g_0=|a_\gamma|^2|dw|^2+|\lambda_\gamma dz+h_\gamma'(w)dw|^2.
$$
Expanding the second term gives
\begin{align*}
\gamma^* g_0 &=
|a_\gamma|^2|dw|^2 + |\lambda_\gamma dz+h_\gamma'(w)dw|^2 \\
&=
|a_\gamma|^2|dw|^2 + |\lambda_\gamma|^2 dz\,d\bar z
+h_\gamma'(w)\overline{\lambda_\gamma}\,dw\,d\bar z
+\lambda_\gamma\overline{h_\gamma'(w)}\,dz\,d\bar w
+|h_\gamma'(w)|^2dw\,d\bar w.
\end{align*}
Hence
\begin{equation*}
 g_{z\bar z}
 =
 \sum_{\gamma\in\Gamma}\chi(\gamma p)|\lambda_\gamma|^2,
 \qquad
 g_{w\bar z}
 =
 \sum_{\gamma\in\Gamma}\chi(\gamma p)h_\gamma'(w)\overline{\lambda_\gamma}.
\end{equation*}
Consequently,
\begin{equation*}
 B(w,z,\bar w,\bar z)
 =
 \frac{
 \displaystyle\sum_{\gamma\in\Gamma}
 \chi(\gamma(w,z))h_\gamma'(w)\overline{\lambda_\gamma}
 }{
 \displaystyle\sum_{\gamma\in\Gamma}
 \chi(\gamma(w,z))|\lambda_\gamma|^2
 }.
\end{equation*}
The denominator is strictly positive by \eqref{eq:cutoff-sum}, since $\lambda_\gamma\ne0$. Hence $B$ is smooth; in general it is neither holomorphic nor a function of $w$ alone.

\begin{lema} \label{lem:Theta-equivariant}
For every $\gamma\in\Gamma$, one has $\gamma^*\Theta=\lambda_\gamma\Theta$. In particular, $T^*\Theta=\Theta$, $G^*\Theta=\lambda\Theta$, and $\varphi^*\Theta=\xi\Theta$ when $r>1$.
\end{lema}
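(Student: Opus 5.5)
The plan is to characterise $\Theta$ intrinsically and then use the $\Gamma$-invariance of $g$ and of the splitting $T_X^{1,0}=V\oplus H$, rather than expanding the explicit formula for $B$. By construction, $\Theta$ is the unique $(1,0)$-form with $\Theta(\partial_z)=1$ and $\Theta|_H=0$. For $\gamma\in\Gamma$, the pulled-back form $\gamma^*\Theta$ will satisfy analogous conditions, with the normalisation scaled by $\lambda_\gamma$. Uniqueness then gives $\gamma^*\Theta=\lambda_\gamma\Theta$.

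First we check the normalisation. For $p\in X$,
$$(\gamma^*\Theta)_p(\partial_z)=\Theta_{\gamma p}(d\gamma_p\,\partial_z)=\Theta_{\gamma p}(\lambda_\gamma\partial_z)=\lambda_\gamma.$$
This uses the computation $d\gamma(\partial_z)=\lambda_\gamma\partial_z$ established above for affine elements of the form \eqref{eq:general-affine-element}. The relevant point is that $\lambda_\gamma$ is a constant, so $\partial_z$ at $p$ is sent to $\lambda_\gamma\partial_z$ at $\gamma p$.

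Second, we check that $\gamma^*\Theta$ vanishes on $H$. Since $g$ is $\Gamma$-invariant, as shown above, and $d\gamma$ maps $V_p$ onto $V_{\gamma p}$, the differential $d\gamma_p$ is an isometry $(T_pX,g_p)\to(T_{\gamma p}X,g_{\gamma p})$. It is also complex linear, because $\gamma$ is holomorphic. It therefore maps the $g$-orthogonal complement $H_p$ of $V_p$ onto $H_{\gamma p}$. Consequently, for $v\in H_p$ we get $(\gamma^*\Theta)_p(v)=\Theta_{\gamma p}(d\gamma_p v)=0$.

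We conclude as follows. Both $\gamma^*\Theta$ and $\lambda_\gamma\Theta$ are $(1,0)$-forms; $\gamma^*\Theta$ is of type $(1,0)$ because $\gamma$ is holomorphic. They agree on $\partial_z$ and both vanish on $H$. Since $T^{1,0}_pX=V_p\oplus H_p$, they coincide. The special cases follow by reading off the multipliers in \Cref{prop:higher-index-green-model}: $\lambda_T=1$, $\lambda_G=\lambda$, and $\lambda_\varphi=\xi$.

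The only potential subtlety is the $\Gamma$-invariance of $H$. This relies on $g$ being invariant, which is the reindexing computation already done above, and on $V$ being preserved, which follows from the triangular form. Neither step is a genuine obstacle.
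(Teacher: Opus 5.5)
Your proof is correct and matches the paper's argument: the paper also notes that $\gamma^*\Theta$ vanishes on the $\Gamma$-invariant complement $H$, hence is a multiple of $\Theta$, and fixes the multiple by evaluating on $\partial_z$ using $d\gamma(\partial_z)=\lambda_\gamma\partial_z$. Your justification that $d\gamma_p$ maps $H_p$ onto $H_{\gamma p}$, because it is a complex-linear $g$-isometry preserving $V$, simply spells out the paper's one-line claim that $H$ is $\Gamma$-invariant.
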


\begin{proof}
The kernel of $\Theta$ is $H$. Since $H$ is $\Gamma$-invariant,
$\gamma^*\Theta$ has the same kernel as $\Theta$. Hence it is a scalar
multiple of $\Theta$. Evaluating on $\partial_z$, we find
\begin{equation*}
 (\gamma^*\Theta)(\partial_z)
 =
 \Theta(d\gamma(\partial_z))
 =
 \Theta(\lambda_\gamma\partial_z)
 =
 \lambda_\gamma,
\end{equation*}
proving the statement.
\end{proof}

\subsection{The explicit normalised flow on the open surface}

Set
\begin{equation}\label{eq:mu-def}
 \mu:=-\frac{2\log|\lambda|}{\log k}.
\end{equation}
For $r>1$, replacing $G$ by $\varphi^jG$ changes its vertical multiplier from $\lambda$ to $\xi^j\lambda$; hence $|\lambda|$, and therefore $\mu$, is independent of this choice. By definition, $k^\mu|\lambda|^2=1$.
By Lemma~\ref{lem:Theta-equivariant} and \eqref{eq:s-transforms},
\begin{align*}
 G^*\left(s^\mu\sqrt{-1}\,\Theta\wedge\overline\Theta\right)
 &=
 (ks)^\mu\sqrt{-1}\,\lambda\Theta\wedge
 \bar\lambda\,\overline\Theta \\
 &=
 k^\mu|\lambda|^2s^\mu
 \sqrt{-1}\,\Theta\wedge\overline\Theta \\
 &=
 s^\mu\sqrt{-1}\,\Theta\wedge\overline\Theta.
\end{align*}
The same form is $T$-invariant. If $r>1$, then $\varphi^*s=s$ and $\varphi^*\Theta=\xi\Theta$ with $|\xi|=1$, so it is also $\varphi$-invariant. Hence it is invariant under the full deck group $\Gamma$ and descends to $S\setminus D$.
Assume from now on that $0<\mu<2$. For $M>0$, define
$$
A(t):=2-\mu+e^{-t}\bigl(M-(2-\mu)\bigr)=e^{-t}M+(1-e^{-t})(2-\mu).
$$
Thus $A(t)>0$ for all $t\geq0$, because $M>0$ and $2-\mu>0$.
Define the time-dependent $(1,1)$-form
\begin{equation}\label{eq:Omega-flow-def}
 \Omega_t^M
 :=
 A(t)\omega_{\rm tr}
 +
 e^{-t}s^\mu\sqrt{-1}\,\Theta\wedge\overline\Theta.
\end{equation}
The preceding invariance calculation shows that $\Omega_t^M$ descends from
$X$ to $S\setminus D$.

\begin{theorem}\label{thm:main-open-flow}
Let $S$ be a Kato surface of intermediate type, let $D$ be its maximal reduced divisor of rational curves, and assume $0<\mu<2$,  namely $k^{-1}<|\lambda|<1$.
Let $\Theta=dz+B\,dw$ be any
smooth $\Gamma$-equivariant vertical $(1,0)$-form on $X$. 
For every $M>0$, the form $\Omega_t^M$ on $S\setminus D$ defined in \eqref{eq:Omega-flow-def} is a smooth Hermitian metric on $S\setminus D$. It solves the normalised Chern--Ricci flow
\begin{equation*}
 \partial_t\Omega_t^M=-\operatorname{Ric}(\Omega_t^M)-\Omega_t^M
\end{equation*}
with initial value $\Omega_0^M = M\omega_{\rm tr} + s^\mu\sqrt{-1}\,\Theta\wedge\overline\Theta$.
Furthermore,
\begin{equation*}
 \Omega_t^M
 \longrightarrow
 (2-\mu)\omega_{\rm tr}
 \quad
 \text{in } \mathcal C^\infty_{\rm loc}(S\setminus D).
\end{equation*}
The limiting form is semipositive of rank one, and its kernel is the canonical foliation $\mathcal F$ considered above.
\end{theorem}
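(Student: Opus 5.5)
The plan is to verify everything by a direct computation in the global frame $(dw,\Theta)$ on $X$, and then descend. The descent is already settled: both $\omega_{\rm tr}$ and $s^\mu\sqrt{-1}\,\Theta\wedge\overline\Theta$ are $\Gamma$-invariant, as computed before the statement.

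\emph{Positivity.} Since $\Theta=dz+B\,dw$, the pair $(dw,\Theta)$ is a smooth coframe of $(1,0)$-forms. In this coframe $\Omega_t^M$ is diagonal,
$$\Omega_t^M=\frac{A(t)}{4s^2}\sqrt{-1}\,dw\wedge d\bar w+e^{-t}s^\mu\sqrt{-1}\,\Theta\wedge\overline\Theta,$$
with both coefficients positive because $A(t)>0$ (this uses $0<\mu<2$ and $M>0$) and $s>0$. Hence $\Omega_t^M$ is a Hermitian metric.

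\emph{Chern--Ricci form.} Changing from $(dw,\Theta)$ to $(dw,dz)$ is a unipotent triangular substitution. So the determinant in the holomorphic coframe $(dw,dz)$ is
$$\det\Omega_t^M=\frac{A(t)e^{-t}}{4}\,s^{\mu-2}.$$
The key point is that $B$ drops out entirely, which is why any equivariant $\Theta$ works. The constant factors are killed by $\partial\overline\partial$, and $\log s$ is pluriharmonic up to the sign convention: $\ddbar(-\log s)=\omega_{\rm tr}$. Therefore
$$\operatorname{Ric}(\Omega_t^M)=-\ddbar\log s^{\mu-2}=-(2-\mu)\,\omega_{\rm tr}.$$

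\emph{Flow equation.} Differentiating in $t$ gives $\partial_t\Omega_t^M=A'(t)\omega_{\rm tr}-e^{-t}s^\mu\sqrt{-1}\,\Theta\wedge\overline\Theta$. On the other hand,
$$-\operatorname{Ric}-\Omega_t^M=\bigl((2-\mu)-A(t)\bigr)\omega_{\rm tr}-e^{-t}s^\mu\sqrt{-1}\,\Theta\wedge\overline\Theta.$$
These agree because $A'=-(A-(2-\mu))$ by the definition of $A$. The initial value is $A(0)=M$.

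\emph{Convergence and the limit.} We have $A(t)\to2-\mu$ and $e^{-t}\to0$ exponentially, with fixed smooth tensor factors, so the convergence holds in $\mathcal C^\infty$ on compact subsets of $X$, and hence in $\mathcal C^\infty_{\rm loc}$ on the quotient. The limit $(2-\mu)\frac{\sqrt{-1}}{4s^2}dw\wedge d\bar w$ is semipositive of rank one. Its kernel is $\{dw=0\}=\mathbb C\partial_z$, the vertical distribution. By the result of Dloussky--Oeljeklaus recalled in the introduction, this is the tangent distribution of the pulled-back canonical foliation.

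The only step needing care is the determinant and sign bookkeeping: the convention $\operatorname{Ric}=-\ddbar\log\det$ must be paired with the correct sign of $\ddbar\log s$ so that the $\omega_{\rm tr}$-coefficient comes out as $2-\mu$. Beyond that, everything is explicit.
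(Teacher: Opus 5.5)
Your proposal is correct and follows essentially the same route as the paper: positivity from the splitting into the $dw$- and $\Theta$-components, and the determinant $\frac{A(t)e^{-t}}{4}s^{\mu-2}$ with the $B$-terms cancelling. Your unipotent change of coframe is just a quicker way to see the cancellation that the paper obtains by expanding the $2\times2$ matrix. The rest also matches: $\operatorname{Ric}(\Omega_t^M)=-(2-\mu)\omega_{\rm tr}$, the identity $A'=(2-\mu)-A$, and local $\mathcal C^\infty$ decay of the $e^{-t}$ term. There is one wording slip: $\log s$ is not pluriharmonic (it is $s=-\Re w$ that is), but the identity you actually use, $\ddbar(-\log s)=\omega_{\rm tr}$, is correct, so the computation stands.
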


\begin{proof}
By construction, $\Omega_t^M$ is smooth and $\Gamma$-invariant on $X$;
hence it descends to a smooth $(1,1)$-form on $S\setminus D$. To prove positivity, let $v\in T_X^{1,0}$. Then
\begin{equation*}
 -\sqrt{-1}\,\Omega_t^M(v,\bar v)
 =
 A(t)\frac{|dw(v)|^2}{4s^2}
 +
 e^{-t}s^\mu|\Theta(v)|^2.
\end{equation*}
Both coefficients on the right-hand side are positive. If the right-hand side vanishes, then
$dw(v)=0$ and $\Theta(v)=0$. The first equality says that $v$ is
vertical, so $v=\alpha\partial_z$; the second gives
$\alpha=\Theta(v)=0$. Thus $v=0$, and $\Omega_t^M$ is positive definite.

We compute the determinant. Writing $\Theta=dz+Bdw$, one has
$$
\sqrt{-1}\,\Theta\wedge\overline\Theta=\sqrt{-1}\,dz\wedge d\bar z+\sqrt{-1}\,\bar B\,dz\wedge d\bar w+\sqrt{-1}\,B\,dw\wedge d\bar z+\sqrt{-1}\,|B|^2dw\wedge d\bar w.
$$
Hence the Hermitian matrix of $\Omega_t^M$ in the coframe $(dw,dz)$ is
\begin{equation*}
 \begin{pmatrix}
 \displaystyle \frac{A(t)}{4s^2}+e^{-t}s^\mu |B|^2
 &
 \displaystyle e^{-t}s^\mu B
 \\
 \displaystyle e^{-t}s^\mu\bar B
 &
 \displaystyle e^{-t}s^\mu
 \end{pmatrix}.
\end{equation*}
Hence
\begin{align*}
 \det(\Omega_t^M)
 &=
 \left(
 \frac{A(t)}{4s^2}
 +
 e^{-t}s^\mu|B|^2
\right)e^{-t}s^\mu
 -
 e^{-2t}s^{2\mu}|B|^2 \\
 &=
 \frac{A(t)e^{-t}}4s^{\mu-2}.
\end{align*}
The $B$-terms cancel exactly.
The Chern--Ricci form is
\begin{align*}
\operatorname{Ric}(\Omega_t^M)
 &= -\ddbar\log\det(\Omega_t^M) \\
 &= -\sqrt{-1}\partial\overline\partial \left( \log\left(\frac{A(t)e^{-t}}4\right) + (\mu-2)\log s\right) \\
 &= -\ddbar\bigl((\mu-2)\log s\bigr) \\
 &= -(\mu-2)\ddbar\log s \\
 &= -(2-\mu)\omega_{\rm tr}.
\end{align*}

We verify the evolution equation. From the definition of $A(t)$,
\begin{equation*}
 A'(t)
 =
 -e^{-t}\bigl(M-(2-\mu)\bigr)
 =
 (2-\mu)-A(t).
\end{equation*}
Since $\omega_{\rm tr}$, $s$, and $\Theta$ are independent of $t$,
\begin{equation*}
 \partial_t\Omega_t^M
 =
 A'(t)\omega_{\rm tr}
 -
 e^{-t}s^\mu\sqrt{-1}\,\Theta\wedge\overline\Theta.
\end{equation*}
On the other hand,
\begin{align*}
 -\operatorname{Ric}(\Omega_t^M)-\Omega_t^M
 &=
 (2-\mu)\omega_{\rm tr}
 -
 A(t)\omega_{\rm tr}
 -
 e^{-t}s^\mu\sqrt{-1}\,\Theta\wedge\overline\Theta \\
 &=
 \bigl((2-\mu)-A(t)\bigr)\omega_{\rm tr}
 -
 e^{-t}s^\mu\sqrt{-1}\,\Theta\wedge\overline\Theta \\
 &= A'(t)\omega_{\rm tr} - e^{-t} s^\mu \sqrt{-1}\,\Theta\wedge\overline\Theta \\
 &= \partial_t\Omega_t^M.
\end{align*}
The initial value follows from \eqref{eq:Omega-flow-def}, because $A(0)=M$.

Finally, let $K\Subset S\setminus D$ be compact. On finitely many lifted
coordinate charts covering $K$, the functions $s$, $B$, and all their
derivatives are bounded. Hence
\begin{equation*}
 e^{-t}s^\mu\sqrt{-1}\,\Theta\wedge\overline\Theta
 \longrightarrow 0
 \qquad
 \text{in }\mathcal C^\infty(K).
\end{equation*}
Since $A(t)\to2-\mu$,
\begin{equation*}
 \Omega_t^M
 \longrightarrow
 (2-\mu)\omega_{\rm tr}
 \qquad
 \text{in }\mathcal C^\infty_{\rm loc}(S\setminus D).
\end{equation*}
The limiting form is positive in the $w$-direction and vanishes on
$\partial_z$. Hence it has rank one and kernel equal to the vertical
foliation.
\end{proof}

 \begin{remark}
As $\mu\to2^-$ ({\itshape i.e.}\ $|\lambda|\to k^{-1}$), the limiting length
$$
L_G=\sqrt{\frac{2-\mu}{2}}\,\log k
$$
tends to zero, so the limiting circle degenerates to a point. As $\mu\to0^+$ ({\itshape i.e.}\ $|\lambda|\to1^-$), the limiting geometry remains non-degenerate. On the other hand, the estimate in \Cref{lem:green-cocycle-growth} contains the factor
$$
\frac{1}{1-|\lambda|},
$$
and therefore degenerates as $|\lambda|\to1^-$.
\end{remark}

We determine when the explicit ansatz is Gauduchon. Write $f=s^\mu$.
Since $\omega_{\rm tr}=\ddbar(-\log s)$, the transverse term is
$\partial\overline{\partial}$-closed. Hence the Gauduchon condition depends
only on the vertical term. Set
\begin{equation*}
\mathcal Q_B
:=
f_{w\bar w}
+
(f|B|^2)_{z\bar z}
-
(fB)_{z\bar w}
-
(f\bar B)_{w\bar z}.
\end{equation*}

\begin{theorem}
\label{thm:gauduchon-criterion-explicit-ansatz}
Let $S$ be a Kato surface of intermediate type, let $D$ be its maximal reduced divisor of rational curves, and assume $0<\mu<2$. For $M>0$, the metric $\Omega_t^M$ defined in \eqref{eq:Omega-flow-def} is Gauduchon for some, equivalently for every, $t\geq0$
if and only if $\mathcal Q_B=0$. If $B$ is independent of $z$ and $\bar z$, this condition reduces to $(s^\mu)_{w\bar w}=0$.
Under the standing assumption $0<\mu<2$, this is equivalent to
\begin{equation*}
\mu=1,
\qquad\text{or equivalently}\qquad
|\lambda|=k^{-1/2}.
\end{equation*}
\end{theorem}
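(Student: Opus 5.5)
The plan is to compute $\ddbar\Omega_t^M$ directly on the universal cover $X$. The Gauduchon condition for a Hermitian surface is $\ddbar\omega=0$, and it is $\Gamma$-invariant, so it suffices to check it upstairs.

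\emph{Splitting the form.} Since $\omega_{\rm tr}=\ddbar(-\log s)$ is $\partial\overline\partial$-exact, we have $\ddbar\omega_{\rm tr}=0$. Hence
$$\ddbar\Omega_t^M=e^{-t}\,\ddbar\bigl(f\sqrt{-1}\,\Theta\wedge\overline\Theta\bigr),\qquad f=s^\mu .$$
The factor $e^{-t}>0$ is independent of the point. This already gives the equivalence ``for some $t$ iff for every $t$'', and it shows that $M$ plays no role.

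\emph{The general criterion.} I would expand $f\sqrt{-1}\,\Theta\wedge\overline\Theta$ in the coframe $(dw,dz)$, using the matrix already written in the proof of \Cref{thm:main-open-flow}. Its coefficients are
$$f_{z\bar z}=f,\qquad f_{z\bar w}=f\bar B,\qquad f_{w\bar z}=fB,\qquad f_{w\bar w}=f|B|^2,$$
where $f_{a\bar b}$ denotes the coefficient of $\sqrt{-1}\,da\wedge d\bar b$. For a $(1,1)$-form $\sum u_{a\bar b}\sqrt{-1}\,da\wedge d\bar b$ on a surface, $\ddbar$ of it equals a fixed nonzero constant multiple of
$$\bigl(u_{w\bar w,z\bar z}+u_{z\bar z,w\bar w}-u_{w\bar z,z\bar w}-u_{z\bar w,w\bar z}\bigr)\,dw\wedge d\bar w\wedge dz\wedge d\bar z.$$
The only work here is tracking the signs from reordering the wedge factors, but it is routine. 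Substituting the coefficients gives exactly $\mathcal Q_B$. Since $dw\wedge d\bar w\wedge dz\wedge d\bar z$ is nowhere zero, Gauduchon is equivalent to $\mathcal Q_B=0$.

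\emph{The special case $B$ independent of $z,\bar z$.} The function $f=s^\mu$ depends only on $w$, so all terms carrying a $z$- or $\bar z$-derivative vanish, and $\mathcal Q_B=f_{w\bar w}$. Next, with $s=-(w+\bar w)/2$,
$$(s^\mu)_{w\bar w}=\tfrac14\mu(\mu-1)s^{\mu-2}.$$
Since $s>0$ and $\mu\neq0$, this vanishes iff $\mu=1$. By \eqref{eq:mu-def}, $\mu=1$ means $|\lambda|^2=k^{-1}$, that is $|\lambda|=k^{-1/2}$. This value lies in the range $k^{-1}<|\lambda|<1$.

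The main obstacle is only the bookkeeping in the general formula for $\ddbar$ of a $(1,1)$-form, in particular fixing the sign convention so that the combination comes out as the stated $\mathcal Q_B$. I would verify it on the test form $f\sqrt{-1}\,dw\wedge d\bar w$, for which the result must be $f_{z\bar z}$ times the volume element, up to the chosen constant.
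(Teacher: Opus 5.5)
Your proposal is correct and follows the paper's proof essentially step by step. Like the paper, you drop the $\partial\overline\partial$-closed transverse term and factor out $e^{-t}$, expand $\sqrt{-1}\,f\,\Theta\wedge\overline\Theta$ in the coframe $(dw,dz)$, apply the four-term formula for $\partial\overline\partial$ of a $(1,1)$-form (the paper fixes the signs explicitly against $dw\wedge dz\wedge d\bar w\wedge d\bar z$, obtaining the overall factor $-\sqrt{-1}$), and specialise to $B=B(w,\bar w)$ using $(s^\mu)_{w\bar w}=\tfrac14\mu(\mu-1)s^{\mu-2}$. The one thing to fix is notation: you write $f_{a\bar b}$ both for the coefficients of $\sqrt{-1}\,f\,\Theta\wedge\overline\Theta$ and for the second derivatives of $f$ appearing in $\mathcal Q_B$, and these clash; the paper names the coefficients $h_{a\bar b}$ instead.
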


\begin{proof}
Since $A(t)$ is independent of the spatial variables and $\partial\overline{\partial}\omega_{\rm tr}=0$,
\begin{equation*}
\partial\overline{\partial}\Omega_t^M
=
e^{-t}
\partial\overline{\partial}
\left(
\sqrt{-1}\,f\,\Theta\wedge\overline{\Theta}
\right).
\end{equation*}
As $e^{-t}>0$, the Gauduchon condition is equivalent, for every $t$, to
\begin{equation*}
\partial\overline{\partial}
\left(
\sqrt{-1}\,f\,\Theta\wedge\overline{\Theta}
\right)
=0.
\end{equation*}
The vertical term can be expanded as before:
in the coframe $(dw,dz)$, the Hermitian coefficient matrix is
\begin{equation*}
f
\begin{pmatrix}
|B|^2 & B\\
\bar B & 1
\end{pmatrix}.
\end{equation*}
Thus $h_{w\bar w}=f|B|^2$, $h_{w\bar z}=fB$, $h_{z\bar w}=f\bar B$, and $h_{z\bar z}=f$.
For completeness, let
\begin{equation*}
\beta=\sqrt{-1}\,h_{j\bar k}\,d\xi^j\wedge d\bar\xi^k,\qquad (\xi^1,\xi^2)=(w,z).
\end{equation*}
With the ordered volume form $\Xi:=dw\wedge dz\wedge d\bar w\wedge d\bar z$, a direct calculation gives
\begin{align*}
\partial\overline{\partial}
\left(\sqrt{-1}\,h_{w\bar w}\,dw\wedge d\bar w\right)
&=
-\sqrt{-1}\,(h_{w\bar w})_{z\bar z}\,\Xi,\\
\partial\overline{\partial}
\left(\sqrt{-1}\,h_{z\bar z}\,dz\wedge d\bar z\right)
&=
-\sqrt{-1}\,(h_{z\bar z})_{w\bar w}\,\Xi,\\
\partial\overline{\partial}
\left(\sqrt{-1}\,h_{w\bar z}\,dw\wedge d\bar z\right)
&=
\sqrt{-1}\,(h_{w\bar z})_{z\bar w}\,\Xi,\\
\partial\overline{\partial}
\left(\sqrt{-1}\,h_{z\bar w}\,dz\wedge d\bar w\right)
&=
\sqrt{-1}\,(h_{z\bar w})_{w\bar z}\,\Xi.
\end{align*}
Summing the four contributions gives
\begin{align*}
\partial\overline{\partial}\beta
&=
-\sqrt{-1}
\left(
(h_{z\bar z})_{w\bar w}
+
(h_{w\bar w})_{z\bar z}
-
(h_{w\bar z})_{z\bar w}
-
(h_{z\bar w})_{w\bar z}
\right)\Xi.
\end{align*}
Substituting the coefficients of
$\sqrt{-1}\,f\,\Theta\wedge\overline{\Theta}$, we obtain
\begin{align*}
\partial\overline{\partial}
\left(
\sqrt{-1}\,f\,\Theta\wedge\overline{\Theta}
\right)
&=
-\sqrt{-1}
\left(
f_{w\bar w}
+
(f|B|^2)_{z\bar z}
-
(fB)_{z\bar w}
-
(f\bar B)_{w\bar z}
\right)\Xi.
\end{align*}
Hence
\begin{equation*}
\partial\overline{\partial}
\left(
\sqrt{-1}\,f\,\Theta\wedge\overline{\Theta}
\right)
=0
\end{equation*}
if and only if
$$
f_{w\bar w}+(f|B|^2)_{z\bar z}-(fB)_{z\bar w}-(f\bar B)_{w\bar z}=0.
$$
This proves the first assertion. 
Suppose that $B$ is independent of $z$ and
$\bar z$. Since $f=s^\mu$ also depends only on $w,\bar w$, one has
\begin{equation*}
(f|B|^2)_{z\bar z}=0,
\qquad
(fB)_{z\bar w}=0,
\qquad
(f\bar B)_{w\bar z}=0.
\end{equation*}
Thus the Gauduchon equation reduces to $f_{w\bar w}=0$. Now
\begin{equation*}
s=-\Re w=-\frac{w+\bar w}{2},
\end{equation*}
so $s_w=s_{\bar w}=-\tfrac12$.
Since $f=s^\mu$, we obtain $f_w=-\frac{\mu}{2}s^{\mu-1}$.
Differentiation with respect to $\bar w$ gives
\begin{align*}
f_{w\bar w}
&=
-\frac{\mu}{2}(\mu-1)s^{\mu-2}s_{\bar w}\\
&=
\frac{\mu(\mu-1)}{4}s^{\mu-2}.
\end{align*}
Hence $f_{w\bar w}=0$ if and only if $\mu=0$ or $\mu=1$.
Under the standing assumption $0<\mu<2$, this is equivalent to $\mu=1$. Since
\begin{equation*}
\mu=-\frac{2\log|\lambda|}{\log k},
\end{equation*}
the identity $\mu=1$ is equivalent to $-2\log|\lambda|=\log k$, and hence to $|\lambda|=k^{-1/2}$.
This proves the second assertion.
\end{proof}

\begin{example}[Formal linear affine model]\label{ex:formal-linear-green}
For comparison, set $h=0$ in the two-generator affine model, so that $T(w,z)=(w+2\pi\sqrt{-1},z)$ and $G(w,z)=(kw,\lambda z)$. This formal linear model is useful for the Gauduchon equation, although it is not an intermediate Kato model in the Dloussky--Oeljeklaus--Toma normal form, where the polynomial term has positive degree \cite[Section~3]{oeljeklaus-toma-moduli}. If $0<\mu<1$ and $\vartheta\in\mathbb R$ is chosen so that $e^{\sqrt{-1}\vartheta\log k}=\lambda/\bar\lambda$, then
$$B=c(w,\bar w)\bar z,\qquad c(w,\bar w)=\frac{\sqrt{\mu(1-\mu)}}{2s}e^{\sqrt{-1}\vartheta\log s}$$
defines an equivariant vertical coform satisfying the Gauduchon equation.
\end{example}

\begin{proof}
Assume $h=0$, so that $T(w,z)=(w+2\pi\sqrt{-1},z)$ and $G(w,z)=(kw,\lambda z)$. Choose $\vartheta\in\mathbb{R}$ such that $e^{\sqrt{-1}\vartheta\log k}=\lambda/\bar\lambda$. This is possible because $\lambda/\bar\lambda\in S^1$. Let $B=c(w,\bar w)\bar z$, where
\begin{equation*}
c(w,\bar w)
=
\frac{\sqrt{\mu(1-\mu)}}{2s}
e^{\sqrt{-1}\vartheta\log s}.
\end{equation*}
The condition $0<\mu<1$ guarantees that $\sqrt{\mu(1-\mu)}$ is real and
positive. Since $s>0$, the function $\log s$ is smooth.
First consider $T$. Since $T$ preserves $s$, $dw$, $dz$, and $\bar z$, one has $T^*\Theta=\Theta$.
For $G$, we compute
\begin{align*}
G^*\Theta
&=
d(\lambda z)
+
B(kw,\lambda z,k\bar w,\bar\lambda\bar z)\,d(kw)\\
&=
\lambda dz
+
k\,B(kw,\lambda z,k\bar w,\bar\lambda\bar z)\,dw.
\end{align*}
Since $s(kw)=ks(w)$,
\begin{align*}
c(kw,k\bar w)
&=
\frac{\sqrt{\mu(1-\mu)}}{2ks}
e^{\sqrt{-1}\vartheta\log(ks)}\\
&=
\frac{1}{k}
e^{\sqrt{-1}\vartheta\log k}
c(w,\bar w) \\
&= \frac{\lambda}{k\bar\lambda}c(w,\bar w).
\end{align*}
Hence
\begin{align*}
k\,B(kw,\lambda z,k\bar w,\bar\lambda\bar z)
&=
k\,c(kw,k\bar w)\bar\lambda\bar z\\
&=
\lambda c(w,\bar w)\bar z\\
&=
\lambda B(w,z,\bar w,\bar z).
\end{align*}
It follows that $G^*\Theta=\lambda dz+\lambda B\,dw=\lambda\Theta$, so $\Theta$ is equivariant. It remains to check the Gauduchon equation. Since $B=c\bar z$, we have
$
|B|^2=|c|^2|z|^2.
 $
The functions $f$ and $c$ depend only on $w,\bar w$. Hence
\begin{equation*}
(f|B|^2)_{z\bar z}
=
(f|c|^2|z|^2)_{z\bar z}
=
f|c|^2.
\end{equation*}
Also, $fB=fc\bar z$ is independent of $z$, whereas $f\bar B=f\bar c z$ is independent of $\bar z$. Hence $(fB)_{z\bar w}=(f\bar B)_{w\bar z}=0$, and the Gauduchon equation reduces to $f_{w\bar w}+f|c|^2=0$.
As computed above,
\begin{equation*}
f_{w\bar w}
=
\frac{\mu(\mu-1)}{4}s^{\mu-2}.
\end{equation*}
Moreover,
\begin{equation*}
|c|^2
=
\frac{\mu(1-\mu)}{4s^2}.
\end{equation*}
Hence
\begin{equation*}
f|c|^2
=
s^\mu\frac{\mu(1-\mu)}{4s^2}
=
\frac{\mu(1-\mu)}{4}s^{\mu-2}.
\end{equation*}
Adding the two terms gives
\begin{align*}
f_{w\bar w}+f|c|^2
&=
\frac{\mu(\mu-1)}{4}s^{\mu-2}
+
\frac{\mu(1-\mu)}{4}s^{\mu-2}=0.
\end{align*}
Thus
$$
\partial\overline{\partial}\!\left(\sqrt{-1}\,s^\mu\Theta\wedge\overline{\Theta}\right)=0.
$$
This proves the claim.
\end{proof}

\begin{remark}
For the general transformation $G(w,z)=(kw,\lambda z+h(w))$, the equivariance equations for $B$ and the Gauduchon equation are independent requirements; equivariance alone does not imply the Gauduchon condition.
More precisely, the condition $T^*\Theta=\Theta$ imposes the periodicity condition
\begin{equation*}
B(w+2\pi\sqrt{-1},z,\bar w-2\pi\sqrt{-1},\bar z)
=
B(w,z,\bar w,\bar z).
\end{equation*}
The condition $G^*\Theta=\lambda\Theta$ becomes
\begin{align*}
h'(w)
+
k\,B
\bigl(
kw,
\lambda z+h(w),
k\bar w,
\bar\lambda\bar z+\overline{h(w)}
\bigr)
=
\lambda B(w,z,\bar w,\bar z).
\end{align*}
These are the equivariance equations for the index-one subgroup. For $r>1$, one must in addition impose $\varphi^*\Theta=\xi\Theta$; this is automatic for the coform constructed in \S2.2, since the averaging is taken over the full deck group. These equivariance conditions are distinct from the Gauduchon
equation
\begin{equation*}
f_{w\bar w}
+
(f|B|^2)_{z\bar z}
-
(fB)_{z\bar w}
-
(f\bar B)_{w\bar z}
=
0.
\end{equation*}
Thus, in the general case, a smooth coefficient $B$ must satisfy two
separate types of requirements: the equivariance equations, which are needed
for descent to the quotient, and the Gauduchon equation, which is needed for
the descended Hermitian form to be Gauduchon. The former do not imply the latter.
\end{remark}

Even when the equivariant family is not Gauduchon for finite time, its
Gauduchon defect decays exponentially. Indeed,
\begin{align*}
\partial\overline{\partial}\Omega_t^M
&=
\partial\overline{\partial}
\left(
A(t)\omega_{\rm tr}
+
e^{-t}\sqrt{-1}\,s^\mu\Theta\wedge\overline{\Theta}
\right)\\
&=
e^{-t}
\partial\overline{\partial}
\left(
\sqrt{-1}\,s^\mu\Theta\wedge\overline{\Theta}
\right),
\end{align*}
because $A(t)$ is independent of the space variables and $\partial\overline{\partial}\omega_{\rm tr}=0$.
Consequently, for every compact $K\Subset S\setminus D$ and every $m\geq0$, the $\mathcal C^m(K)$-norm of $\partial\overline\partial\Omega_t^M$ is $O(e^{-t})$ with respect to any fixed background metric. This decay does not imply the exact Gauduchon condition at finite time.

The leafwise geometry of the explicit solution is recorded in the following proposition.
\begin{propo}
\label{prop:hyperbolic-leafwise-flatness}
Let $S$ be a Kato surface of intermediate type, let $D$ be its maximal reduced divisor of rational curves, and assume $0<\mu<2$. For $M>0$, the family $\Omega_t^M$ on $S\setminus D$ defined in \eqref{eq:Omega-flow-def} is flat along the leaves of the canonical foliation on $S\setminus D$ and collapses homothetically along them, with metric factor $e^{-t}$.
\end{propo}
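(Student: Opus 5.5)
The plan is to work on the universal cover $X=\mathbb H_\ell\times\mathbb C$ and use the fact, recalled in the introduction from \cite[Theorem 5.6]{DO}, that the canonical foliation $\mathcal F$ on $S\setminus D$ lifts to the foliation of $X$ by the vertical fibres $\{w=w_0\}$. Since $\Gamma$ acts by affine triangular maps \eqref{eq:general-affine-element}, it permutes these fibres, and the leaves of $\mathcal F$ in $S\setminus D$ are the images of the fibres under $q$. Because $q$ is a local isometry for the descended metric, leafwise flatness of $\Omega_t^M$ reduces to a pointwise statement about the induced metric on a single vertical fibre of $X$.

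The first step is to restrict \eqref{eq:Omega-flow-def} to the fibre $\{w=w_0\}$. On that fibre $dw=0$, so $\omega_{\rm tr}=\frac{\sqrt{-1}}{4s^2}dw\wedge d\bar w$ pulls back to zero, and $\Theta=dz+B\,dw$ pulls back to $dz$. The value $s=-\Re w_0$ is constant along the fibre. Therefore
\begin{equation*}
\Omega_t^M\big|_{\{w=w_0\}}=e^{-t}s(w_0)^\mu\,\sqrt{-1}\,dz\wedge d\bar z ,
\end{equation*}
which is a constant multiple of the Euclidean metric in the holomorphic coordinate $z$. It is thus flat, and in fact it has no dependence on $B$ at all. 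Comparing with $t=0$ gives $\Omega_t^M|_{\rm leaf}=e^{-t}\,\Omega_0^M|_{\rm leaf}$, so along each leaf the family is a homothety with factor $e^{-t}$. For the induced Riemannian metrics this factor is $e^{-t}$ on $g$, and hence $e^{-t/2}$ on distances.

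The second step checks that this is compatible with the quotient. The leaf through $q(w_0,z)$ is $q(\{w=w_0\})$. The stabiliser of the fibre in $\Gamma$ consists of those $\gamma$ with $a_\gamma w_0+b_\gamma=w_0$, and it acts on the fibre by the affine maps $z\mapsto\lambda_\gamma z+h_\gamma(w_0)$. These maps preserve $s^\mu\sqrt{-1}\,dz\wedge d\bar z$ exactly when $|\lambda_\gamma|^2 s(a_\gamma w_0+b_\gamma)^\mu=s(w_0)^\mu$, which is precisely the invariance already established for $s^\mu\sqrt{-1}\,\Theta\wedge\overline\Theta$. The leaf metric therefore descends and remains flat. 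Because the paper states that every non-compact leaf is biholomorphic to $\mathbb C$, this stabiliser is in fact trivial on $S\setminus D$, so the check is only a consistency remark.

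The only point needing care is the identification of the leaves of $\mathcal F$ with the vertical fibres for the full group $\Gamma$ when $r>1$. I expect to handle it by noting that $\varphi$ is also triangular, so it maps vertical fibres to vertical fibres, and by using the covering $A_1\to A$ of \Cref{prop:higher-index-green-model}: the foliation on $A$ pulls back to that on $A_1$, to which \cite{DO} applies.
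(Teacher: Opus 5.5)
Your proposal is correct and follows the paper's proof. You restrict $\Omega_t^M$ to the lifted leaves $\{w=w_0\}=\ker dw$, where $\omega_{\rm tr}$ vanishes and $\Theta$ restricts to $dz$, which gives $e^{-t}s_0^\mu\sqrt{-1}\,dz\wedge d\bar z$, a flat metric that scales homothetically by $e^{-t}$; your extra descent and stabiliser check plays the role of the paper's subsequent remark on compatibility with the deck action.
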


\begin{proof}
The one-form $dw$ is semi-invariant: $T^*dw=dw$ and $G^*dw=k\,dw$; when $r>1$, one also has $\varphi^*dw=dw$. Hence its kernel is invariant under the full deck group. The canonical
foliation is induced, on the cover $\mathbb H_\ell\times\mathbb C$, by $\ker dw=\mathbb C\,\partial_z$. Its lifted leaves are $L_{w_0}:=\{w=w_0\}\times\mathbb C$, where $w_0=-s_0+\sqrt{-1}\theta_0$ and $s_0:=-\Re w_0>0$.
Along $L_{w_0}$ one has $dw=ds=d\theta=0$, and hence $\omega_{\rm tr}|_{L_{w_0}}=0$.
Since $\Theta=dz+B\,dw$, one has $\Theta|_{L_{w_0}}=dz$.
Restricting the explicit metric $\Omega_t^M = A(t)\omega_{\rm tr} + e^{-t}s^\mu\sqrt{-1}\Theta\wedge\overline{\Theta}$ to $L_{w_0}$, we obtain
$$
\Omega_t^M|_{L_{w_0}}
=
\sqrt{-1}\,e^{-t}s_0^\mu\,dz\wedge d\bar z.
$$
This is a constant multiple of the Euclidean metric on the $z$-line.
Consequently the intrinsic Chern--Ricci form vanishes:
$$
\operatorname{Ric}\bigl(\Omega_t^M|_{L_{w_0}}\bigr)
=
-\sqrt{-1}\,\partial_z\bar\partial_z
\log(e^{-t}s_0^\mu)
=
0.
$$
Thus the leafwise metrics are flat. Since the only time-dependent factor in
the leafwise metric is $e^{-t}$, the collapse along the leaves is homothetic,
with metric factor $e^{-t}$, equivalently with length factor $e^{-t/2}$.
\end{proof}

\begin{remark}
This description is compatible with the affine Green action. The generator $T$ preserves $s_0$ and $dz$. The generator $G$ sends $L_{w_0}$ to
$L_{kw_0}$, and
$$
s(kw_0)=ks_0,
\qquad
dz\longmapsto \lambda\,dz.
$$
Since $|\lambda|^2=k^{-\mu}$, one has
$$
e^{-t}(ks_0)^\mu|\lambda|^2|dz|^2
=
e^{-t}s_0^\mu|dz|^2.
$$
The additional transformation $\varphi$ preserves $s$ and sends $dz$ to $\xi\,dz$; as $|\xi|=1$, it also preserves the leafwise metric. Hence the leafwise expression is equivariant under the full deck group and descends to the leaves on $S\setminus D$.
\end{remark}

\subsection{Gromov--Hausdorff collapse of the Chern--Ricci flow}

With the notation above, set $\rho:=\frac12\log s$.
The transformations $T$ and, when $r>1$, $\varphi$ preserve $\rho$, whereas $G$ sends $\rho$ to $\rho+(1/2)\log k$. Set
$$
\ell_G:=\frac12\log k,
\qquad
S^1_G:=\mathbb{R}/\ell_G\mathbb{Z},
$$
and denote by $\bar\rho:A\longrightarrow S^1_G$ the induced Green projection.

Let $g_t$ be the Riemannian metric associated with $\Omega_t^M$, and let
$d_t$ be its distance on $A$. If $K\Subset A$, the notation
$d_t|_{K\times K}$ always denotes the ambient distance of $A$ restricted to
$K\times K$, not the intrinsic length distance of $K$. We endow $S^1_G$
with the length metric induced by
$$
2(2-\mu)\,d\rho^2,
$$
and denote the corresponding distance by $d_G$. Its total length is
$$ L_G=\sqrt{2(2-\mu)}\,\ell_G=\sqrt{\frac{2-\mu}{2}}\,\log k.$$
The Riemannian metric associated with $\omega_{\rm tr}$ is
\begin{equation*}
g_{\rm tr}
=
\frac{1}{2s^2}(ds^2+d\theta^2).
\end{equation*}
Since $d\rho=(1/2)\,ds/s$, we have $ds=2s\,d\rho$, and hence
\begin{equation*}
g_{\rm tr}
=
2\,d\rho^2+\frac{1}{2s^2}d\theta^2.
\end{equation*}
Hence $g_t = A(t)g_{\rm tr} + 2e^{-t}s^\mu|\Theta|^2$, with $A(t)\longrightarrow 2-\mu$ as $t\to+\infty$.
In particular,
$$
g_t\geq A(t)g_{\rm tr}\geq 2A(t)\,d\rho^2.
$$
Let $\gamma:[0,1]\to A$ be a piecewise smooth path from $p$ to $q$. Fix lifts $\tilde p,\tilde q\in X$ of $p,q$, and lift $\gamma$ with $\widetilde\gamma(0)=\tilde p$. Then $\widetilde\gamma(1)=\delta\tilde q$ for some $\delta\in\Gamma$. The function $\rho=\frac12\log s$ is real-valued on $X$, and the preceding pointwise inequality gives
\begin{align*}
\operatorname{Length}_{g_t}(\gamma)
&\geq
\sqrt{2A(t)}
\int_0^1|\dot\rho(\widetilde\gamma(\tau))|\,d\tau.
\end{align*}
Every deck transformation changes $\rho$ by an integral multiple of $\ell_G$: the generators $T$ and $\varphi$ preserve $\rho$, whereas $G$ sends $\rho$ to $\rho+\ell_G$. Consequently,
$$
\int_0^1|\dot\rho(\widetilde\gamma(\tau))|\,d\tau
\geq
\min_{m\in\mathbb Z}|\rho(\tilde p)-\rho(\tilde q)-m\ell_G|
=
\frac{d_G(\bar\rho(p),\bar\rho(q))}{\sqrt{2(2-\mu)}}.
$$
Taking the infimum over all paths from $p$ to $q$, we obtain
\begin{equation}
\label{eq:GH-lower-bound}
d_t(p,q)
\geq
\left(\sqrt{\frac{A(t)}{2-\mu}}\right)
d_G(\bar\rho(p),\bar\rho(q)).
\end{equation}
This is the intermediate analogue of the fact that the projection to the base is
distance non-increasing in the Enoki case. Thus the Green direction does not collapse.

\phantomsection
\label{def:compact-green-block}
Fix $s_0>0$, $R>0$, and set
$$
\rho_*:=\frac12\log s_0,
\qquad
I_*:=[\rho_*,\rho_*+\ell_G],
\qquad
I_*^+:=[\rho_*-\ell_G,\rho_*+2\ell_G].
$$
On the index-one cover $A_1$, the quotient of the real slice $\theta=0$ by the action generated by $G$ is a smooth affine $\mathbb C$-bundle over $S^1_G$. Since its fibre $\mathbb C$ is contractible, it admits a smooth section $\sigma_1:S^1_G\to A_1$. Let $p:A_1\to A$ be the finite cyclic covering of \Cref{prop:higher-index-green-model}. Since $p$ preserves the Green coordinate, $\bar\rho\circ p=\bar\rho_1$, where $\bar\rho_1:A_1\to S^1_G$ denotes the Green projection. Hence $\sigma:=p\circ\sigma_1$ satisfies $\bar\rho\circ\sigma=\operatorname{id}_{S^1_G}$ and is a smooth section of $\bar\rho$. Choose an equivariant lift over $\mathbb R$ of the form
$$\widetilde\sigma(\rho)=\bigl(-e^{2\rho},z_\sigma(\rho)\bigr).$$
Equivariance of the lift is exactly
\begin{equation}\label{eq:z-sigma-equivariance}
z_\sigma(\rho+\ell_G)=\lambda z_\sigma(\rho)+h(-e^{2\rho}),
\end{equation}
so that $\widetilde\sigma(\rho+\ell_G)=G(\widetilde\sigma(\rho))$.

Choose $R^+>R$ large enough so that the graph of
$\widetilde\sigma$ over $I_*^+$ is contained in $\{|z|\leq R^+\}$. On
the Green cover define
\begin{equation*}
\widetilde K_{s_0,R}
:=
\{s_0\leq s\leq ks_0,\ 0\leq\theta\leq2\pi,\ |z|\leq R\},
\end{equation*}
and
\begin{equation*}
\widetilde K^+_{s_0,R^+}
:=
\{k^{-1}s_0\leq s\leq k^2s_0,\ -1\leq\theta\leq2\pi+1,\ |z|\leq R^+\}.
\end{equation*}
The compact subsets
$$
K_{s_0,R}:=q(\widetilde K_{s_0,R}),
\qquad
K^+_{s_0,R^+}:=q(\widetilde K^+_{s_0,R^+})
$$
will be called a \emph{compact Green block} and its compact enlargement. For fixed $s_0>0$, the family $\{K_{s_0,R}\}_{R>0}$ exhausts $A$ as $R\to+\infty$. Indeed, every point of $A$ admits a lift to $X$. Applying a suitable power of $G$, we may arrange that its $s$-coordinate lies in $[s_0,ks_0]$, and then applying a suitable power of $T$ we may arrange that its angular coordinate lies in $[0,2\pi]$. The resulting representative has finite $z$-coordinate and therefore belongs to $\widetilde K_{s_0,R}$ for all sufficiently large $R$.

The annulus $s_0\leq s\leq ks_0$ represents one full period of
$S^1_G$, so $\bar\rho(K_{s_0,R})=S^1_G$. The larger tube
$\widetilde K^+_{s_0,R^+}$ provides the room needed for the estimates below.
Indeed, angular corrections of size $<1$ remain inside the interval
$-1\leq\theta\leq2\pi+1$, and every section arc realising a shortest
Green-circle distance can be lifted inside $I_*^+$. Since $\widetilde K^+_{s_0,R^+}$ is compact in $X$ and $B$ is smooth on $X$, the functions $s$, $s^{-1}$, $B$, and the relevant first derivatives of $B$ are uniformly bounded on $\widetilde K^+_{s_0,R^+}$.

Purely vertical segments are estimated by pulling back the metric to curves
$$
\beta(\tau)=(w_0,z(\tau))
$$
in the Green cover. Since $w_0$ is fixed, $\beta^*dw=0$, and hence
$$
\beta^*\Theta
=
\beta^*(dz+B\,dw)
=
d(z\circ\beta).
$$
Thus the transverse term $A(t)\omega_{\rm tr}$ vanishes on such segments,
and their lengths are controlled only by the vertical term
$2e^{-t}s(w_0)^\mu|\Theta|^2$. No bound for $B$ is needed there.

We record the following elementary estimate for the affine Green action. If
$\tilde p\in\mathbb{H}_\ell\times\mathbb{C}$, we denote by $z(\tilde p)$ its vertical
coordinate.

\begin{lema}
\label{lem:green-cocycle-growth}
Fix a compact Green block $K_{s_0,R}$ with enlargement
$K^+_{s_0,R^+}$. Set $a_\lambda:=-\log|\lambda|$. Then $a_\lambda>0$, and there exist constants $C_K>0$ and $b_K\geq0$ such that
the following holds. For $n\geq1$ and $m\in\mathbb{Z}$, let
$$
\gamma_{n,m}:=G^{-n}T^mG^n.
$$
If $\tilde p,\tilde q\in\mathbb{H}_\ell\times\mathbb{C}$ satisfy
\begin{equation*}
k^{-1}s_0\leq s(\tilde p),s(\tilde q)\leq k^2s_0,
\qquad
|z(\tilde p)|\leq R^+,\qquad |z(\tilde q)|\leq R^+,
\end{equation*}
then
\begin{equation*}
\bigl|
z(\gamma_{n,m}\tilde p)-z(\tilde q)
\bigr|
\leq
C_K\exp(a_\lambda n+b_Kk^n).
\end{equation*}
\end{lema}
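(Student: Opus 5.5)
The plan is to compute $\gamma_{n,m}=G^{-n}T^mG^n$ explicitly in the affine normal form of \Cref{prop:higher-index-green-model}, and then to estimate the resulting vertical cocycle term by term. First, $a_\lambda>0$ is immediate: $\mu>0$ means $|\lambda|=k^{-\mu/2}<1$.

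\textbf{Step 1: iterate $G$.} By induction on $n$,
$$
G^n(w,z)=\bigl(k^nw,\ \lambda^nz+H_n(w)\bigr),\qquad H_n(w):=\sum_{j=0}^{n-1}\lambda^{\,n-1-j}\,h(k^jw).
$$
Inverting this gives $G^{-n}(W,Z)=\bigl(k^{-n}W,\ \lambda^{-n}(Z-H_n(k^{-n}W))\bigr)$.

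\textbf{Step 2: conjugate $T^m$.} Composing, and using that $T^m$ only shifts $w$ by $2\pi\sqrt{-1}\,m$, one obtains
$$
\gamma_{n,m}(w,z)=\Bigl(w',\ z+\lambda^{-n}\bigl(H_n(w)-H_n(w')\bigr)\Bigr),\qquad w':=w+\frac{2\pi\sqrt{-1}\,m}{k^n}.
$$
The key observation is that $\Re w'=\Re w$. Hence $s(w')=s(w)$, and both $H_n(w)$ and $H_n(w')$ are evaluated at points with the same Green coordinate $s$. Only the $\theta$-coordinate moves, and the estimate must not depend on it.

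\textbf{Step 3: bound $h$ along the orbit.} Write $h(w)=Q(e^{-w})$ with $Q(\zeta)=\sum_{m=\ell}^{\sigma}b_m\zeta^m$, so that $|e^{-k^jw}|=e^{k^js}$. For $s(\tilde p)\le k^2s_0$ and $0\le j\le n-1$ this yields
$$
|h(k^jw)|\le C_Q\,e^{\sigma k^{j}s}\le C_Q\exp\bigl(\sigma k^{n+1}s_0\bigr),
$$
where $C_Q:=\sum_m|b_m|$. I am assuming here that the exponents of $Q$ are nonnegative, which is the case in the Oeljeklaus--Toma normal form. I will double-check this, but any lower exponent only contributes factors of size at most $1$, since $s>0$ gives $|e^{-w}|\ge1$ and so every power $\zeta^m$ with $m\le\sigma$ is bounded by $|\zeta|^\sigma$.

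Since $|\lambda|<1$, the geometric factor satisfies $\sum_{j}|\lambda|^{n-1-j}\le 1/(1-|\lambda|)$. Therefore
$$
|H_n(w)|+|H_n(w')|\le\frac{2C_Q}{1-|\lambda|}\exp\bigl(\sigma k^{n+1}s_0\bigr).
$$
This is where the factor $1/(1-|\lambda|)$ from the preceding remark enters.

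\textbf{Step 4: assemble.} By the triangle inequality,
$$
|z(\gamma_{n,m}\tilde p)-z(\tilde q)|\le |z(\tilde p)|+|z(\tilde q)|+|\lambda|^{-n}\bigl(|H_n(w)|+|H_n(w')|\bigr).
$$
The first two terms are each at most $R^+$. In the last term, $|\lambda|^{-n}=e^{a_\lambda n}$. Taking $C_K:=2R^++2C_Q/(1-|\lambda|)$ and $b_K:=\sigma k s_0$, so that $\sigma k^{n+1}s_0=b_Kk^n$, gives the claimed inequality. For this it suffices that $e^{a_\lambda n+b_Kk^n}\ge1$, which holds because $a_\lambda>0$ and $b_K\ge0$. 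In particular the bound is uniform in $m$, as required.

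The main point needing care is Step 2: one must verify that the conjugation produces exactly the difference $H_n(w)-H_n(w')$, and that $w'$ has the same real part as $w$. This is what makes the estimate independent of $m$, and therefore of how far $\theta$ is shifted. The rest consists of crude exponential bounds on a polynomial in $e^{-w}$.
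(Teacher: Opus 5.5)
Your proposal is correct and follows essentially the same route as the paper. The shared steps are the inductive formula for $G^n$ with cocycle $H_n$, the conjugation identity $\gamma_{n,m}(w,z)=(w+2\pi\sqrt{-1}m/k^n,\ z+\lambda^{-n}(H_n(w)-H_n(w+2\pi\sqrt{-1}m/k^n)))$ preserving $s$, the growth bound on $h(k^jw)$ coming from $|e^{-w}|=e^{s}$, the geometric-series factor $1/(1-|\lambda|)$, and the final triangle inequality; your explicit constants $C_K=2R^++2C_Q/(1-|\lambda|)$ and $b_K=\sigma k s_0$ (with $\sigma\geq0$, as the normal-form polynomial has positive degree) are just a concrete version of the paper's choices.
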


\begin{proof}
Recall that $|\lambda|=k^{-\mu/2}$. Since $0<\mu<2$, one has $0<|\lambda|<1$, and hence $a_\lambda=-\log|\lambda|>0$. 
We first compute the conjugate explicitly. A direct induction gives
\begin{equation*}
G^n(w,z)
=
\left(
k^nw,\lambda^nz+H_n(w)
\right),
\end{equation*}
where
\begin{equation*}
H_n(w):=
\sum_{j=0}^{n-1}
\lambda^{\,n-1-j}h(k^jw).
\end{equation*}
Put
\begin{equation*}
\delta_{n,m}:=\frac{2\pi\sqrt{-1}m}{k^n}.
\end{equation*}
Then
$$
T^mG^n(w,z)=\left(k^n(w+\delta_{n,m}),\lambda^nz+H_n(w)\right).
$$
To apply $G^{-n}$, we solve
$$
G^n(w+\delta_{n,m},Z)=T^mG^n(w,z).
$$
The first coordinates already agree. Comparing the second coordinates gives
\begin{equation*}
\lambda^nZ+H_n(w+\delta_{n,m})
=
\lambda^nz+H_n(w),
\end{equation*}
and therefore
\begin{equation}
\label{eq:green-conjugate-formula}
\gamma_{n,m}(w,z)
=
\left(
w+\delta_{n,m},
z+\lambda^{-n}
\bigl(H_n(w)-H_n(w+\delta_{n,m})\bigr)
\right).
\end{equation}
In particular, $\gamma_{n,m}$ changes the $w$-coordinate by a purely
imaginary translation, and therefore it does not change $s=-\Re w$.

It remains to estimate the cocycle term. Let $w=w(\tilde p)$. On the index-one subgroup, the Dloussky--Oeljeklaus--Toma affine Green normal form gives a $T$-periodic cocycle of the form $h(w)=\widehat h(e^{-w})$, where $\widehat h$ is a finite normal polynomial \cite{dloussky-oeljeklaus-toma,oeljeklaus-toma-moduli}. If
$\widehat h\not\equiv0$, let $d:=\deg\widehat h$, and otherwise put
$d:=0$.
We have
$$
|e^{-w}|=e^{s(w)}.
$$
Thus there exists a constant $C_K>0$ such that
\begin{equation}
\label{eq:h-polynomial-growth}
|h(\zeta)|
\leq
C_K\exp(d\,s(\zeta))
\end{equation}
on the Green half-strips under consideration.

Now $\delta_{n,m}$ is purely imaginary. Hence adding
$\delta_{n,m}$ does not change the real part. For every
$0\leq j\leq n-1$, we have
\begin{equation*}
s(k^jw)=k^js(w),
\qquad
s(k^j(w+\delta_{n,m}))=k^js(w).
\end{equation*}
Since $s(w)\leq k^2s_0$ on the controlled enlargement and $j\leq n-1$, we
have, after increasing the constant if necessary,
$$
s(k^jw)\leq C_K k^n,
\qquad
s(k^j(w+\delta_{n,m}))\leq C_K k^n.
$$
 (Here and below, $C_K$ denotes a constant depending only on $K$
and on the section $\sigma$; its value may change from line to line.)
Hence, for some $b_K\geq0$,
\begin{equation}
\label{eq:h-growth-green-halfstrip}
|h(k^jw)|+
|h(k^j(w+\delta_{n,m}))|
\leq
C_K\exp(b_Kk^n)
\end{equation}
for every $0\leq j\leq n-1$. 
Using the definition of $H_n$, we obtain
\begin{align*}
\left|
H_n(w)-H_n(w+\delta_{n,m})
\right|
&\leq
\sum_{j=0}^{n-1}
|\lambda|^{\,n-1-j}
\left|
h(k^jw)-h(k^j(w+\delta_{n,m}))
\right| \\
&\leq
C_K\exp(b_Kk^n)
\sum_{j=0}^{n-1}
|\lambda|^{\,n-1-j}.
\end{align*}
Since $0<|\lambda|<1$,
\begin{equation*}
\sum_{j=0}^{n-1}
|\lambda|^{\,n-1-j}
=
\sum_{\ell=0}^{n-1}|\lambda|^\ell
\leq
\frac{1}{1-|\lambda|}.
\end{equation*}
Thus, after increasing $C_K$,
\begin{equation}
\label{eq:Hn-difference-growth-bound}
\left|
H_n(w)-H_n(w+\delta_{n,m})
\right|
\leq
C_K\exp(b_Kk^n).
\end{equation} 
Finally, using \eqref{eq:green-conjugate-formula},
\eqref{eq:Hn-difference-growth-bound}, and the bounds $|z(\tilde p)|,|z(\tilde q)|\leq R^+$, we obtain
\begin{align*}
\bigl|
z(\gamma_{n,m}\tilde p)-z(\tilde q)
\bigr|
&\leq
|z(\tilde p)-z(\tilde q)|
+
|\lambda|^{-n}
\left|
H_n(w)-H_n(w+\delta_{n,m})
\right| \\
&\leq
C_K+C_K|\lambda|^{-n}\exp(b_Kk^n).
\end{align*}
Since $|\lambda|^{-n}=e^{a_\lambda n}$ and $\exp(a_\lambda n+b_Kk^n)\geq1$, we may enlarge $C_K$ once more and obtain
$$
\bigl|
z(\gamma_{n,m}\tilde p)-z(\tilde q)
\bigr|
\leq
C_K\exp(a_\lambda n+b_Kk^n).
$$
This proves the lemma.
\end{proof}

We now establish the analogue of the fibre-diameter estimate in the Enoki case. There the fibres collapse because their metric is multiplied by $e^{-t}$. Here the $z$-direction collapses directly by the same factor, whereas the angular direction is treated using the conjugates $G^{-n}T^mG^n$, which produce angular translations of size $2\pi m/k^n$.

\begin{lema}
\label{lem:fibre-collapse}
Let $K=K_{s_0,R}$ be a compact Green block. Then there exists a non-negative function $\varepsilon_K$ on $[0,\infty)$, finite for every $t\ge0$ and satisfying $\varepsilon_K(t)\to0$ as $t\to+\infty$, such that
\begin{equation*}
d_t(p,\sigma(\bar\rho(p)))\leq \varepsilon_K(t)
\end{equation*}
for every $p\in K$. In particular, if $p,q\in K$ and
$\bar\rho(p)=\bar\rho(q)$, then
$
d_t(p,q)\leq 2\varepsilon_K(t).
 $
\end{lema}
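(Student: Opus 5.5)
The plan is to connect each $p\in K=K_{s_0,R}$ to $\sigma(\bar\rho(p))$ by a path built on the Green cover out of three pieces: a deck transformation, a purely vertical segment, and a short angular segment. The difficulty is the angular direction $\theta$. There $g_t\geq A(t)\,\frac{1}{2s^2}d\theta^2$ does not collapse, because $A(t)\to 2-\mu>0$. So we cannot simply slide $\theta$ to $0$. Instead we use the conjugates $\gamma_{n,m}=G^{-n}T^mG^n$ of Lemma~\ref{lem:green-cocycle-growth}. By \eqref{eq:green-conjugate-formula} they translate $\theta$ by $2\pi m/k^n$ and leave $s$ unchanged. The price is a possibly huge vertical displacement, and vertical displacements are cheap since that direction collapses like $e^{-t/2}$.

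Concretely, lift $p$ to $\tilde p\in\widetilde K_{s_0,R}$, with $s(\tilde p)\in[s_0,ks_0]$, $\theta(\tilde p)\in[0,2\pi]$ and $|z(\tilde p)|\leq R$. Let $\rho=\frac12\log s(\tilde p)\in I_*$ and put $\tilde q:=\widetilde\sigma(\rho)=(-s(\tilde p),z_\sigma(\rho))$. Then $q(\tilde q)=\sigma(\bar\rho(p))$ and $|z(\tilde q)|\leq R^+$.

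Fix $n\geq1$ and choose $m\in\mathbb Z$ so that $\theta':=\theta(\tilde p)+2\pi m/k^n\in[0,2\pi/k^n)$. The point $\gamma_{n,m}\tilde p$ projects to $p$ and has the same $s$. The path has two segments.

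\textbf{Vertical segment.} Move from $\gamma_{n,m}\tilde p$ to $(w',z(\tilde q))$ at fixed $w'=-s+\sqrt{-1}\theta'$. On such segments $\Theta$ pulls back to $dz$, so no bound on $B$ is needed. The length is at most
\[
\sqrt2\,e^{-t/2}s^{\mu/2}\,|z(\gamma_{n,m}\tilde p)-z(\tilde q)|\leq C_K e^{-t/2}\exp(a_\lambda n+b_Kk^n),
\]
by Lemma~\ref{lem:green-cocycle-growth} and since $s\leq ks_0$.

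\textbf{Angular segment.} Move from $(w',z(\tilde q))$ to $\tilde q$ along $\theta\in[0,\theta']$, with $s$ and $z=z(\tilde q)$ fixed. This segment lies in $\widetilde K^+_{s_0,R^+}$, where $s^{-1}$ and $B$ are bounded. Along it $dz=0$, so $|\Theta|=|B|\,|d\theta|$. Its length is therefore at most
\[
\bigl(\sqrt{A(t)}/(\sqrt2 s_0)+C e^{-t/2}\bigr)\,2\pi k^{-n}\leq C_K k^{-n},
\]
using that $A(t)$ is bounded by $\max\{M,2-\mu\}$.

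Adding the two pieces, we set
\[
\varepsilon_K(t):=\inf_{n\geq1}\Bigl(C_Ke^{-t/2}\exp(a_\lambda n+b_Kk^n)+C_Kk^{-n}\Bigr).
\]
This is finite for every $t$ and independent of $p$. The main obstacle is balancing the choice of $n$: it must be large enough that $k^{-n}\to0$, but the cocycle bound grows doubly exponentially in $n$. We choose $n=n(t)$ to be the largest integer with $b_Kk^n+a_\lambda n\leq t/4$. Then $n(t)\to\infty$, since the left-hand side is finite for each fixed $n$; if $b_K=0$ the condition is just $a_\lambda n\leq t/4$. With this choice the first term is at most $C_Ke^{-t/4}$ and the second tends to $0$. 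Hence $\varepsilon_K(t)\to0$.

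The final assertion then follows from the triangle inequality through the common point $\sigma(\bar\rho(p))=\sigma(\bar\rho(q))$.
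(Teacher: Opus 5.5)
Your proposal is correct and follows essentially the same route as the paper's proof: the conjugates $\gamma_{n,m}=G^{-n}T^mG^n$ absorb the angular discrepancy, a purely vertical segment is controlled by Lemma~\ref{lem:green-cocycle-growth} together with the $e^{-t/2}$ leafwise collapse, a short angular segment lies inside $\widetilde K^+_{s_0,R^+}$, and $n$ is balanced against $t$ so that the doubly exponential cocycle growth is beaten by $e^{-t/2}$. The differences are cosmetic. You choose $m$ with $\theta'\in[0,2\pi/k^n)$, which keeps the angular segment in the enlarged tube for every $n\geq1$ and so avoids the paper's threshold $N_K$. You take $n(t)$ maximal with $b_Kk^n+a_\lambda n\leq t/4$ rather than $\lfloor\log_k(\delta t)\rfloor$. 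Finally, you define $\varepsilon_K$ as an infimum over $n$, which gives finiteness for small $t$ without the paper's separate supremum definition.
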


\begin{proof}
Fix $p\in K$ and set $p^\sigma:=\sigma(\bar\rho(p))$.
Choose lifts $\tilde p\in\widetilde K_{s_0,R}$ and
$\tilde p^\sigma=\widetilde\sigma(\rho(\tilde p))$. Thus the two lifts have
the same $s$-coordinate. The remaining discrepancy is in the angular
coordinate and in the vertical coordinate.

For $n\geq1$, set $\gamma_{n,m}:=G^{-n}T^mG^n$.
By \Cref{lem:green-cocycle-growth}, this conjugate acts on the $w$-coordinate
as
$$
w\longmapsto w+\frac{2\pi\sqrt{-1}m}{k^n}.
$$
Set $\Delta\theta_0:=\theta(\tilde p^\sigma)-\theta(\tilde p)$.
The conjugate $\gamma_{n,m}$ changes the angular coordinate by
$$
\theta\longmapsto \theta+\frac{2\pi m}{k^n}.
$$
Choose $m=m(n)$ to be an integer nearest to
$$
\frac{k^n}{2\pi}\Delta\theta_0.
$$
Then
$$
\left|
m(n)-\frac{k^n}{2\pi}\Delta\theta_0
\right|
\leq
\frac12.
$$
Multiplying by $2\pi/k^n$, we obtain
$$
\left|
\frac{2\pi m(n)}{k^n}
-
\Delta\theta_0
\right|
\leq
\frac{\pi}{k^n}.
$$
Hence
$$
\left|
\theta( \gamma_{n,m(n)}\tilde p)-\theta(\tilde p^\sigma)
\right|
\leq
C_K k^{-n}.
$$
Since $\theta(\tilde p)\in[0,2\pi]$ and
$\theta(\tilde p^\sigma)=0$, the number $\Delta\theta_0$ is uniformly
bounded. Hence
$$
|m(n)|
\leq
\frac{k^n}{2\pi}|\Delta\theta_0|+\frac12
\leq
C_K k^n.
$$
By \Cref{lem:green-cocycle-growth}, the remaining vertical displacement
satisfies
\begin{equation}
\label{eq:vertical-displacement-bound}
\bigl|
z( \gamma_{n,m(n)}\tilde p)-z(\tilde p^\sigma)
\bigr|
\leq
C_K\exp(a_\lambda n+b_Kk^n).
\end{equation}

Since $ \gamma_{n,m(n)}\tilde p$ projects to the same point of $A$ as
$\tilde p$, it is enough to estimate a path from
$ \gamma_{n,m(n)}\tilde p$ to $\tilde p^\sigma$ in the cover. We use two
segments.
First, keep $w$ fixed and move only in the $z$-direction. Put
$
z_0:=z( \gamma_{n,m(n)}\tilde p), 
z_1:=z(\tilde p^\sigma),
$
and define
$$
\beta_1(\tau)
=
\bigl(w(\gamma_{n,m}\tilde p),(1-\tau)z_0+\tau z_1\bigr),
\qquad
0\leq\tau\leq1.
$$
Along $\beta_1$, one has $\beta_1^*dw=0$, hence $\beta_1^*\Theta=d(z\circ\beta_1)$. Therefore
$$
g_t(\dot\beta_1,\dot\beta_1)
=
2e^{-t}s^\mu|z_1-z_0|^2.
$$
Since $s\in[s_0,ks_0]$ along this segment,
\eqref{eq:vertical-displacement-bound} gives
\begin{align*}
\operatorname{Length}_{g_t}(\beta_1)
&\leq
C_K e^{-t/2}|z_1-z_0|\\
&\leq
C_K e^{-t/2}\exp(a_\lambda n+b_Kk^n).
\end{align*}

Second, keep the corrected $z$-coordinate fixed and correct the remaining
angular error. Set
$$
\Delta\theta
:=
\theta(\tilde p^\sigma)-\theta( \gamma_{n,m(n)}\tilde p).
$$
Then
$$
|\Delta\theta|\leq C_Kk^{-n}.
$$
For $n$ sufficiently large,  $n\ge N_K:=\lceil\log_kC_K\rceil+1$, this angular error is $<1$, so the segment below lies inside
the enlarged tube:
\begin{equation*}
\beta_2(\tau)
=
\bigl(
-s+\sqrt{-1}(\theta( \gamma_{n,m(n)}\tilde p)+\tau\Delta\theta),
z_1
\bigr),
\qquad
0\leq\tau\leq1,
\end{equation*}
where $s$ is the common $s$-coordinate. Along $\beta_2$, we have
$$
ds=0,
\qquad
dw=\sqrt{-1}\Delta\theta\,d\tau,
\qquad
dz=0,
$$
and hence
$$
\Theta(\dot\beta_2)=\sqrt{-1}B\,\Delta\theta.
$$
Also,
$$
g_{\rm tr}(\dot\beta_2,\dot\beta_2)
=
\frac{|\Delta\theta|^2}{2s^2}.
$$
Thus
\begin{align*}
g_t(\dot\beta_2,\dot\beta_2)
&=
|\Delta\theta|^2
\left(
\frac{A(t)}{2s^2}
+
2e^{-t}s^\mu |B|^2
\right).
\end{align*}
On $\widetilde K^+_{s_0,R^+}$, the functions $s$, $s^{-1}$, and $B$ are uniformly bounded, and $A(t)$ is uniformly bounded for $t\geq0$. Hence
$$
\operatorname{Length}_{g_t}(\beta_2)
\leq
C_K|\Delta\theta|
\leq
C_K k^{-n}.
$$

Combining the two segments, we obtain
\begin{equation*}
d_t(p,p^\sigma)
\leq
C_K\left(k^{-n}+e^{-t/2}\exp(a_\lambda n+b_Kk^n)\right).
\end{equation*}
Choose $\delta>0$ so small that $b_K\delta<1/2$,
and set
\[
t_0:=t_0(K):=\frac{k^{N_K}}{\delta},
\]
so that $t>t_0$ implies $n(t):=\lfloor\log_k(\delta t)\rfloor\ge N_K$. Then, for
$t>t_0$,
$$
k^{n(t)}\leq \delta t
\qquad\text{and}\qquad
k^{-n(t)}\leq \frac{k}{\delta t}\longrightarrow0.
$$
Moreover,
$$
e^{a_\lambda n(t)}
\leq
C_K t^{a_\lambda/\log k}.
$$
Hence
$$
e^{-t/2}\exp(a_\lambda n(t)+b_Kk^{n(t)})
\leq
C_K t^{a_\lambda/\log k}
\exp\bigl((-1/2+b_K\delta)t\bigr)
\longrightarrow0.
$$
Hence
$$
d_t(p,p^\sigma)\leq \varepsilon_K(t),
\qquad
\varepsilon_K(t)\to0,
$$
uniformly for $p\in K$. If $p,q\in K$ and
$\bar\rho(p)=\bar\rho(q)$, then $\sigma(\bar\rho(p))=\sigma(\bar\rho(q))$. The triangle inequality gives
$$
d_t(p,q)\leq 2\varepsilon_K(t).
$$
This proves the claim for $t>t_0$. For $t\in[0,t_0]$, set
\[
\varepsilon_K(t):=\sup_{p\in K}d_t\big(p,\sigma(\bar\rho(p))\big),
\]
which is finite since $K$ is compact.
\end{proof}

We shall also use the corresponding upper estimate in the Green direction.

\begin{lema}
\label{lem:green-upper-estimate}
Let $K=K_{s_0,R}$ be a compact Green block. Then
\begin{equation*}
d_t(p,q)
\leq
d_G(\bar\rho(p),\bar\rho(q))+o_K(1)
\end{equation*}
uniformly for $p,q\in K$.
\end{lema}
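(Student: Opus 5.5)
The idea is to mimic the upper estimate in the Enoki case, with the smooth section $\sigma$ of $\bar\rho$ playing the role of the horizontal lift. Fix $p,q\in K$. By \Cref{lem:fibre-collapse} we have $d_t(p,\sigma(\bar\rho(p)))\leq\varepsilon_K(t)$ and $d_t(q,\sigma(\bar\rho(q)))\leq\varepsilon_K(t)$. By the triangle inequality it therefore suffices to show that
\[
d_t\bigl(\sigma(x),\sigma(y)\bigr)\leq d_G(x,y)+o_K(1)
\]
uniformly in $x,y\in S^1_G$. The lemma then follows with error $2\varepsilon_K(t)+o_K(1)$.

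To bound $d_t(\sigma(x),\sigma(y))$, choose a shortest arc in $S^1_G$ from $x$ to $y$. Its length in $d_G$ is $\sqrt{2(2-\mu)}\,|\rho_1-\rho_0|$ with $|\rho_1-\rho_0|\leq\ell_G/2$, and we may lift the endpoints so that $\rho_0,\rho_1\in I_*^+$. We then take the path $\tau\mapsto\widetilde\sigma(\rho(\tau))$ with $\rho(\tau)=(1-\tau)\rho_0+\tau\rho_1$. Along this path $\theta\equiv0$ and $s=e^{2\rho}$, so by the formula $g_{\rm tr}=2\,d\rho^2+\frac{1}{2s^2}d\theta^2$ the transverse contribution to $g_t$ is exactly $2A(t)\dot\rho^2$. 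The vertical contribution is
\[
2e^{-t}s^\mu\bigl|\Theta(\dot{\widetilde\sigma})\bigr|^2=2e^{-t}s^\mu\bigl|z_\sigma'(\rho)+B\,dw(\partial_\rho\widetilde\sigma)\bigr|^2\dot\rho^2 .
\]
Since the graph of $\widetilde\sigma$ over $I_*^+$ is contained in the compact set $\widetilde K^+_{s_0,R^+}$, and $s$, $B$, $z_\sigma'$ are bounded there, this term is at most $C_Ke^{-t}\dot\rho^2$. Hence
\[
\operatorname{Length}_{g_t}\leq\sqrt{2A(t)+C_Ke^{-t}}\,|\rho_1-\rho_0|=\sqrt{\frac{A(t)+C_Ke^{-t}/2}{2-\mu}}\;d_G(x,y).
\]
Since $A(t)\to2-\mu$ and $d_G\leq L_G$ is bounded, the right-hand side equals $d_G(x,y)+o_K(1)$ uniformly.

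Two points need care. First, the lifted path projects to a genuine path in $A$ joining $\sigma(x)$ to $\sigma(y)$. This holds because $\sigma=p\circ\sigma_1$ and $\widetilde\sigma$ is the equivariant lift, by \eqref{eq:z-sigma-equivariance}. So the endpoints are lifts of $\sigma(x)$ and $\sigma(y)$ up to deck transformations, which do not affect $d_t$ on $A$. Second, all constants must be uniform in $x,y$. This is ensured by restricting to the single compact interval $I_*^+$, which contains a lift of every shortest arc.

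The main obstacle is already settled by \Cref{lem:fibre-collapse}, namely the collapse in the angular and vertical directions. In the present lemma, the only delicate step is checking that the $\Theta$-component along the section is bounded independently of $t$. This follows from the compactness of the enlarged block, which is where the choice of $R^+$ was made.
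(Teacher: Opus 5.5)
Your proposal is correct and follows the paper's proof essentially step for step. The paper also reduces to the section via \Cref{lem:fibre-collapse} and the triangle inequality. It then lifts a shortest arc to $I_*^+$ and bounds the length of the section arc $\widetilde\sigma|_{[a,b]}$ using $g_t(\dot\Gamma,\dot\Gamma)\leq 2A(t)+C_Ke^{-t}$, where $\Theta(\dot\Gamma)=z_\sigma'-2sB$ is bounded on the compact enlarged block.
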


\begin{proof}
Set $\xi:=\bar\rho(p)$ and $\eta:=\bar\rho(q)$.
By \Cref{lem:fibre-collapse},
$$
d_t(p,\sigma(\xi))\leq \varepsilon_K(t),
\qquad
d_t(q,\sigma(\eta))\leq \varepsilon_K(t).
$$
Thus
$$
d_t(p,q)
\leq
2\varepsilon_K(t)+d_t(\sigma(\xi),\sigma(\eta)).
$$
Choose lifts $a,b\in I_*^+$ of $\xi,\eta$ such that the interval
$[a,b]$ realises the circle distance; explicitly, choose $a\in I_*$ to be a lift of $\xi$, and
$b$ the lift of $\eta$ with $|b-a|\le\ell_G/2$, so that $[a,b]$ realises
$d_G(\xi,\eta)$ and $b\in I^+_*$.
Then
$$
d_G(\xi,\eta)
=
\sqrt{2(2-\mu)}\,|b-a|.
$$
The lifted section arc is
$$
\Gamma(\rho)=\widetilde\sigma(\rho)
=
\bigl(-e^{2\rho},z_\sigma(\rho)\bigr),
\qquad
\rho\in[a,b].
$$
On $I_*^+$, we have $|z_\sigma'|\leq C_K$. Along $\Gamma$ one has
$$
d\theta=0,
\qquad
ds=2s\,d\rho,
\qquad
dw=-2s\,d\rho.
$$
Hence $g_{\rm tr}(\dot\Gamma,\dot\Gamma)=2$. Furthermore,
$$
\Theta(\dot\Gamma)
=
z_\sigma'(\rho)-2sB.
$$
Since $s$ and $B$ are uniformly bounded on $\widetilde K^+_{s_0,R^+}$, we obtain
$$
|\Theta(\dot\Gamma)|\leq C_K.
$$
Consequently,
$$
g_t(\dot\Gamma,\dot\Gamma)=2A(t)+2e^{-t}s^\mu|\Theta(\dot\Gamma)|^2\leq 2A(t)+C_K e^{-t}.
$$
Hence
$$
\operatorname{Length}_{g_t}(\Gamma)
\leq
\sqrt{2A(t)+C_Ke^{-t}}\,|b-a|.
$$
Since $A(t)\to2-\mu$, this gives
\begin{align*}
\operatorname{Length}_{g_t}(\Gamma)
&\leq
\sqrt{2(2-\mu)}\,|b-a|+o_K(1)\\
&=
d_G(\xi,\eta)+o_K(1),
\end{align*}
uniformly in $\xi,\eta$. Hence
$$
d_t(\sigma(\xi),\sigma(\eta))
\leq
d_G(\xi,\eta)+o_K(1).
$$
Since $\xi,\eta$ are $t$-independent, this holds uniformly in $\xi,\eta$.
Combining with $d_t(p,q)\le2\varepsilon_K(t)+d_t(\sigma(\xi),\sigma(\eta))$
from Lemma \ref{lem:fibre-collapse} gives
\[
d_t(p,q)\ \le\ d_G(\xi,\eta)+2\varepsilon_K(t)+o_K(1)\ =\ d_G(\bar\rho(p),\bar\rho(q))+o_K(1),
\]
since $\varepsilon_K(t)\to0$. This proves the claim. 
\end{proof}

\begin{theorem}
\label{thm:green-exhaustive-GH}
Let $S$ be a Kato surface of intermediate type, let $D$ be its maximal reduced divisor of rational curves, assume $0<\mu<2$, and fix $M>0$. On the open surface $S\setminus D$, consider the family of Hermitian metrics $\Omega^M_t$ defined in \eqref{eq:Omega-flow-def}.
Let $K=K_{s_0,R}$ be a compact Green block. Then
\begin{equation*}
(K,d_t|_{K\times K})
\longrightarrow
(S^1_G,d_G)
\end{equation*}
in the Gromov--Hausdorff sense as $t\to+\infty$.  Thus, along the exhaustion by compact Green blocks, the explicit normalised solution $\Omega_t^M$ collapses $S\setminus D$ to the Green circle.
\end{theorem}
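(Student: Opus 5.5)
The plan is to show that the Green projection $\bar\rho\colon K\to S^1_G$ is an $\epsilon_t$-Gromov--Hausdorff approximation with $\epsilon_t\to0$. This requires two things: that $\bar\rho(K)$ is all of $S^1_G$, and that the distortion
$$
\sup_{p,q\in K}\bigl|d_t(p,q)-d_G(\bar\rho(p),\bar\rho(q))\bigr|
$$
tends to zero. Surjectivity was already noted after the definition of the compact Green block: the annulus $s_0\le s\le ks_0$ covers one full period of $S^1_G$, so $\bar\rho(K_{s_0,R})=S^1_G$. All the analytic input is already available in the preceding estimates, so the argument is essentially a bookkeeping combination of a lower bound and an upper bound.

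\emph{Lower bound.} I would invoke \eqref{eq:GH-lower-bound}. It gives
$$
d_t(p,q)\ge\sqrt{A(t)/(2-\mu)}\;d_G(\bar\rho(p),\bar\rho(q)).
$$
Since $d_G\le L_G/2$ is bounded and $A(t)\to2-\mu$, this becomes $d_t(p,q)\ge d_G(\bar\rho(p),\bar\rho(q))-o(1)$. The error term is
$$
\Bigl|1-\sqrt{A(t)/(2-\mu)}\Bigr|\,\frac{L_G}{2},
$$
which depends only on $t$ and therefore tends to zero uniformly in $p,q$. Note that this bound holds for the ambient distance of $A$, which is exactly the restricted distance $d_t|_{K\times K}$ appearing in the statement.

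\emph{Upper bound.} I would invoke \Cref{lem:green-upper-estimate}, which gives $d_t(p,q)\le d_G(\bar\rho(p),\bar\rho(q))+o_K(1)$ uniformly on $K\times K$. That lemma rests on the fibre-collapse estimate of \Cref{lem:fibre-collapse} together with the length bound along the lifted section arc. Combining the two bounds shows that the distortion of $\bar\rho$ on $K$ tends to zero.

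\emph{Conclusion.} A surjective map with distortion $\epsilon$ gives $d_{GH}\le\epsilon/2$ via its graph correspondence. Hence $(K,d_t|_{K\times K})\to(S^1_G,d_G)$. The exhaustion statement then follows because, as shown above, the blocks $K_{s_0,R}$ exhaust $A$ as $R\to\infty$.

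The only delicate point is uniformity. The $o_K(1)$ in \Cref{lem:green-upper-estimate} comes from $\varepsilon_K(t)$, from the bound $\sqrt{2A(t)+C_Ke^{-t}}-\sqrt{2(2-\mu)}$, and from the boundedness of $|b-a|\le\ell_G/2$. I would check that none of these depends on $p$ or $q$, only on $K$ and $t$. With that in hand, the main obstacle, the fibre collapse, is already handled by \Cref{lem:fibre-collapse}.
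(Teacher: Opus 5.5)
Your proposal is correct and follows the paper's proof essentially verbatim. You use surjectivity of $\bar\rho$ on the block, the lower bound from \eqref{eq:GH-lower-bound} with $A(t)\to 2-\mu$ and the finite diameter of $S^1_G$, the upper bound from \Cref{lem:green-upper-estimate}, and the fact that a surjective map with vanishing distortion is a Gromov--Hausdorff approximation; your explicit error term $\bigl|1-\sqrt{A(t)/(2-\mu)}\bigr|\,L_G/2$ and the bound $d_{GH}\le\epsilon/2$ just make the paper's uniformity claims quantitative.
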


\begin{proof}
By construction, $\bar\rho(K)=S^1_G$, so $\bar\rho:K\to S^1_G$ is onto. We show that its distortion tends to zero.
The lower estimate \eqref{eq:GH-lower-bound} gives
$$
d_t(p,q)
\geq
\left(\sqrt{\frac{A(t)}{2-\mu}}\right)
d_G(\bar\rho(p),\bar\rho(q)).
$$
Since $A(t)\to2-\mu$ and $S^1_G$ has finite diameter,
$$
d_t(p,q)
\geq
d_G(\bar\rho(p),\bar\rho(q))-o_K(1)
$$
uniformly for $p,q\in K$. The opposite estimate is
\Cref{lem:green-upper-estimate}:
$$
d_t(p,q)
\leq
d_G(\bar\rho(p),\bar\rho(q))+o_K(1).
$$
Hence
$$
\sup_{p,q\in K}
\left|
d_t(p,q)-d_G(\bar\rho(p),\bar\rho(q))
\right|
\longrightarrow0.
$$
Thus the distortion of $\bar\rho$ tends to zero. Since $\bar\rho$ is
onto, it is an $\eta_t$-Gromov--Hausdorff approximation with
$\eta_t\to0$. This proves the asserted Gromov--Hausdorff convergence.
\end{proof}

\subsection*{Acknowledgements}
 The first author is partially supported by project PRIN2022 ``Real and Complex Manifolds: Geometry and Holomorphic Dynamics'' (code 2022AP8HZ9), is a member of GNSAGA of INdAM, and thanks the Universit\`a degli Studi di Bari for its hospitality during several visits. The second author is partially supported by the Universit\`a degli Studi di Bari and by the PRIN 2022MWPMAB--``Interactions between Geometric Structures and Function Theories'', is a member of INdAM-GNSAGA, and thanks the Universit\`a degli Studi di Firenze for its hospitality during several visits since 2023, when this work was initiated.

\subsection*{AI disclosure}
AI were used for proofreading, improving the exposition, and checking computations. All mathematical content, arguments, and results are the authors' own; the authors have independently verified the final manuscript and assume full responsibility for its content.

\end{document}